\newtheorem{thm}{Theorem}[section]
\newtheorem{cor}[thm]{Corollary}
\newtheorem{lem}[thm]{Lemma}
\newtheorem{prop}[thm]{Proposition}
\theoremstyle{definition}
\newtheorem{defn}[thm]{Definition}
\theoremstyle{remark}
\newtheorem{rem}[thm]{Remark}
\numberwithin{equation}{section}
\newcommand{\set}[1]{\left\{#1\right\}}
\newcommand{\Real}{\mathbb R}
\newcommand{\func}[1]{\ensuremath{\mathop{\mathrm{ #1}}} \nolimits}
\newcommand{\xX}[0]{\mathbf{x}}
\newcommand{\eE}[0]{\mathbf{e}}
\newcommand{\TT}[0]{\mathbf{T}}
\newcommand{\yY}[0]{\mathbf{y}}
\renewcommand{\SS}{\mathbb{S}^1}
\newcommand{\thetanabla}{\tensor[^\theta]{{\nabla}}{}}
\newcommand{\taunabla}{\tensor[^\tau]{{\nabla}}{}}
\newcommand{\RP}{\mathbb{RP}}
\newcommand{\Mob}[0]{\func{M\ddot{o}b}}
\newcommand{\Diff}[0]{\func{Diff}}
\newcommand{\BDiff}[0]{\func{BDiff}}
\newcommand{\HDiff}[0]{\func{HDiff}}
\begin{document}
\title[The ovals of Benguria and Loss]{One-dimensional projective structures, convex curves and the Ovals of Benguria \& Loss}
\date{August 26, 2014}
\author[J.~Bernstein and T.~Mettler]{Jacob Bernstein and Thomas Mettler}
\address{Department of Mathematics, Johns Hopkins University, Baltimore, MD 21218, USA}
\email{bernstein@math.jhu.edu}
\address{Department of Mathematics, ETH Z\"urich, 8092 Z\"urich, Switzerland}
\email{mettler@math.ch}
\thanks{The first author was partially supported by the NSF Grant DMS-1307953. The second author was supported by Schweizerischer Nationalfonds SNF via the postdoctoral fellowship PA00P2\_142053.}
\begin{abstract}
 Benguria and Loss have conjectured that, amongst all smooth closed curves in $\Real^2$ of length $2\pi$, the lowest possible eigenvalue 
of the operator $L=-\Delta+\kappa^2$ is $1$.  They observed that this value was achieved on a 
two-parameter family, $\mathcal{O}$, of geometrically distinct ovals containing the round circle and collapsing to a multiplicity-two 
line 
segment.  We characterize the curves in $\mathcal{O}$ as absolute minima of two related geometric functionals.  We also 
discuss a connection with
projective differential geometry and use it to explain the natural symmetries of all three problems.
\end{abstract}
\maketitle

\section{Introduction}
In \cite{MR2091490}, Benguria and Loss conjectured that for any, $\sigma$, a smooth closed curve in $\Real^2$ of length $2\pi$, the 
lowest eigenvalue, $\lambda_\sigma$, of the operator $L_\sigma=-\Delta_\sigma+\kappa^2_\sigma$ satisfied $\lambda_\sigma\geq 1$. 
That is, they 
conjectured that for all such $\sigma$ and all functions $f\in H^1(\sigma)$,
\begin{equation} \label{OvalsSpecEqn}
 \int_{\sigma} |\nabla_\sigma f|^2 +\kappa^2_\sigma f^2 \; \mathrm{d}s \geq \int_{\sigma} f^2 \; \mathrm{d}s,
\end{equation}
where $\nabla_\sigma f$ is the intrinsic gradient of $f$, $\kappa_\sigma$ is the geodesic curvature and $\mathrm{d}s$ is the length element.
This conjecture was motivated by their observation that it was equivalent to a certain one-dimensional Lieb-Thirring inequality 
 with conjectured sharp constant.  They further observed that the above inequality is saturated on 
a two-parameter family of strictly convex curves which contains the round circle and degenerates into a multiplicity-two line segment.  The 
curves in this 
family look like ovals and so we call them the ovals of Benguria and Loss and denote the family by $\mathcal{O}$.  Finally, 
they showed that for closed curves $\lambda_\sigma\geq \frac{1}{2}$.

Further work on the conjecture was carried out by Burchard and Thomas in \cite{MR2203162}. They showed that $\lambda_\sigma$ is 
strictly minimized 
in a certain neighborhood of $\mathcal{O}$ in the space of closed curves -- verifying the conjecture in this 
neighborhood.  More globally, Linde~\cite{Linde} improved 
the lower bound to $\lambda_\sigma\geq 0.608$ when $\sigma$ is a planar convex curves.  In addition, he showed that 
$\lambda_\sigma\geq 1$ when $\sigma$ satisfied a certain symmetry condition.   Recently, Denzler~\cite{Denzler2013} has shown that if 
the conjecture is false,  then the 
infimum of $\lambda_\sigma$ over the space of closed curves is achieved by a closed strictly convex planar 
curve.  Coupled with Linde's work, this implies that for any closed 
curve 
$\lambda_\sigma\geq 0.608$.  In a different direction, 
the first author and Breiner in \cite{MR3148113} connected the conjecture to a certain convexity property for the length of 
curves in 
a minimal annulus. 

In the present article, we consider the family $\mathcal{O}$ and observe that the curves in this class are the unique minimizers of two 
natural 
geometric functionals.  To motivate these functionals, we first introduce an energy functional modeled on \eqref{OvalsSpecEqn}.  
Specifically, for a 
smooth curve, $\sigma$, of length $L(\sigma)$ and function, $f\in C^\infty(\sigma)$, set
\begin{equation}
 \label{MainIneq}
 \mathcal{E}_S[\sigma,f]= \int_{\sigma} |\nabla_\sigma f|^2 +\kappa^2_\sigma f^2 -\frac{(2\pi)^2}{L(\sigma)^2} f^2\; \mathrm{d}s.
\end{equation}
Clearly, the conjecture of Benguria and Loss is equivalent to the non-negativity of this functional.  For any strictly convex smooth curve, 
$\sigma$, set
\begin{equation} \label{OneIneq}
 \mathcal{E}_G[\sigma]=\int_{\sigma} \frac{|\nabla_\sigma \kappa_\sigma|^2}{4 \kappa^3_\sigma} - \frac{(2\pi)^2}{L(\sigma)^2}
\frac{1}{ \kappa_\sigma} \; \mathrm{d}s + {2\pi},  
\end{equation}
and 
\begin{equation} \label{TwoIneq}
 \mathcal{E}_G^*[\sigma]=\int_{\sigma} \frac{|\nabla_\sigma \kappa_\sigma|^2}{4 \kappa^2_\sigma}-\kappa^2_\sigma \; \mathrm{d}s + 
\frac{(2\pi)^2}{L(\sigma)}.
\end{equation}
Notice $\mathcal{E}_G$ is scale invariant, while $\mathcal{E}_G^*$ scales inversely with length.
 We will show that $\mathcal{E}_G$ and 
$\mathcal{E}_G^*$ are dual to each other in a certain sense -- justifying the notation.

Our main result is that the functionals \eqref{OneIneq} and \eqref{TwoIneq} are always non-negative and are zero only 
for  ovals.
\begin{thm}\label{MainThm}
If $\sigma$ is a smooth strictly convex closed curve  in $\Real^2$, then both $\mathcal{E}_G[\sigma]\geq 0$ and $\mathcal{E}_G^*[\sigma]\geq 
0$
with equality if and only if $\sigma\in \mathcal{O}.$ 
\end{thm}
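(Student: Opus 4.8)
The plan is to reparametrize by the tangent angle, reduce both inequalities to a single sharp one-dimensional inequality, and then exploit the projective symmetry to identify the extremizers.

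\emph{Step 1: reduction to a functional of the radius of curvature.} Since $\sigma$ is strictly convex, $\kappa_\sigma>0$ and the turning angle $\theta$ is a global coordinate on $\sigma$, with $\mathrm{d}s=\rho\,\mathrm{d}\theta$ where $\rho=1/\kappa_\sigma$ and $\partial_s=\rho^{-1}\partial_\theta$. A direct computation gives
\[ \mathcal{E}_G[\sigma]=\int_0^{2\pi}\frac{(\rho')^2}{4\rho^2}\,\mathrm{d}\theta+2\pi-\frac{(2\pi)^2}{L(\sigma)^2}\int_0^{2\pi}\rho^2\,\mathrm{d}\theta, \]
and similarly $\mathcal{E}_G^*[\sigma]=\int_0^{2\pi}\bigl(\tfrac{(\rho')^2}{4\rho^3}-\rho^{-1}\bigr)\mathrm{d}\theta+\tfrac{(2\pi)^2}{L(\sigma)}$, with $L(\sigma)=\int_0^{2\pi}\rho\,\mathrm{d}\theta$. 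Here the closing-up of $\sigma$ is encoded in the pair of linear constraints $\int_0^{2\pi}\rho\,e^{\pm i\theta}\,\mathrm{d}\theta=0$ (equivalently, $\rho$ lies in the image of $\partial_\theta^2+1$ acting on support functions). Taking $f=\kappa_\sigma^{-1/2}=\sqrt{\rho}$ in \eqref{MainIneq} reproduces the first display, so $\mathcal{E}_G[\sigma]=\mathcal{E}_S[\sigma,\kappa_\sigma^{-1/2}]$; that is, the first assertion is the Benguria--Loss functional evaluated on a distinguished test function.

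\emph{Step 2: the extremizers and the projective symmetry.} By scale invariance I normalize $L(\sigma)=2\pi$, so $\mathcal{E}_G\geq 0$ becomes $\int_0^{2\pi}\tfrac{(\rho')^2}{4\rho^2}\,\mathrm{d}\theta+2\pi\geq\int_0^{2\pi}\rho^2\,\mathrm{d}\theta$ under $\int\rho=2\pi$ and the closing constraints. Solving the Euler--Lagrange equation of this constrained problem, which in the variable $\rho$ reads $(\log\rho)''=2\nu\rho-4c\rho^2$ (with $c=(2\pi)^2/L(\sigma)^2$ and $\nu$ a Lagrange multiplier), singles out the family
\[ \kappa_\sigma(\theta)=a+b\cos 2(\theta-\theta_0),\qquad a^2-b^2=1,\quad a\geq 1, \]
and one checks directly that $\mathcal{E}_G$ and $\mathcal{E}_G^*$ both vanish on it. This is a two-parameter family containing the round circle ($a=1$) and degenerating to a doubled segment as $a\to\infty$; these are the ovals $\mathcal{O}$. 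Each such $\kappa_\sigma$ solves $\kappa_\sigma''+4\kappa_\sigma=\text{const}$, and the family is precisely the orbit of the round circle under the $\mathfrak{sl}(2,\Real)$-action generated on the tangent-angle circle by $\partial_\theta,\ \cos 2\theta\,\partial_\theta,\ \sin 2\theta\,\partial_\theta$. This is the projective structure of the title, and it explains why $\cos 2\theta,\sin 2\theta$ are the neutral directions of the second variation at the circle.

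\emph{Step 3: the inequality, rigidity, and the main obstacle.} I would first show that the induced action of this (non-compact) group $PSL(2,\Real)$ on convex curves preserves $\mathcal{E}_G$, so that it suffices to establish $\mathcal{E}_G>0$ transverse to the orbit $\mathcal{O}$. Using the two boost generators I would gauge-fix (balance) an arbitrary curve so that the second Fourier mode of its density is annihilated; on this slice the quadratic form of the normalized functional, proportional to $n^2-4$ on the $n$-th mode, becomes strictly positive once the mean ($n=0$) is fixed, the first harmonic ($n=\pm1$) is killed by the closing constraints, and the second harmonic ($n=\pm2$) is removed by the gauge. The statement for $\mathcal{E}_G^*$ I would then obtain from the duality advertised in the introduction—the classical correspondence exchanging $\kappa_\sigma$ and $\rho$ carries $\mathcal{E}_G$ to $\mathcal{E}_G^*$ and fixes $\mathcal{O}$—or, failing a clean duality, by applying the identical reduction and balancing to $\mathcal{E}_G^*$. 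The crux is the passage from this infinitesimal positivity to a global strict inequality: because $\mathcal{E}_G$ is invariant under both scalings and the non-compact $PSL(2,\Real)$, it is genuinely non-coercive, and minimizing sequences can escape along the degeneration $a\to\infty$ to the doubled segment. The essential difficulty is therefore to control this non-compactness—matching it exactly with the $PSL(2,\Real)$-orbit—and to upgrade the balanced-slice estimate from second order (where $\cos 2\theta,\sin 2\theta$ are neutral) to a global inequality with equality precisely on $\mathcal{O}$; the cleanest route would be to exhibit the balanced functional as a manifest sum of squares plus the constraint terms, vanishing only at the circle.
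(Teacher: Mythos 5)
Your Steps 1 and 2 are sound: the reformulation in terms of $\rho=1/\kappa_\sigma$ as a function of tangent angle is correct, the closing conditions are the right linear constraints, and the family you exhibit (curvature $a+b\cos 2(\theta-\theta_0)$ in tangent angle, $a^2-b^2=1$) is indeed $\mathcal{O}$ and does annihilate both functionals. But the proof has a genuine gap, and it is exactly the one you name yourself at the end of Step 3: nothing in your argument passes from the second variation at the round circle to a global inequality. The mode-by-mode positivity ``proportional to $n^2-4$'' is only the quadratic form of $\mathcal{E}_G$ at $\rho\equiv 1$; away from the circle the functional is not quadratic in $\rho$ (it involves $(\log\rho)'$), so killing the $n=0,\pm 1,\pm 2$ modes does not make it a sum of nonnegative mode contributions, and the hoped-for ``manifest sum of squares'' is never produced. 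Moreover the gauge-fixing by the non-compact group is a nonlinear condition whose global solvability you do not establish, and even granted, what you would obtain is local minimality near the orbit $\mathcal{O}$ --- essentially the neighborhood result of Burchard--Thomas --- not the theorem. A second, smaller inaccuracy: the constrained Euler--Lagrange equation does not ``single out'' the $n=2$ family; it also has solutions $\rho=(a+b\cos n(\theta-\theta_0))^{-1}$ for every $n\ge 3$, all of which satisfy the closing constraints, so critical points must additionally be compared by energy, and the genuinely dangerous $n=1$ branch (the family \eqref{BadFamily}, along which the infimum would be approached after the constraints are relaxed) must be excluded by some structural property, not by the closing constraint alone once compactness has been lost.

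For contrast, the paper's resolution of precisely this non-compactness is: (i) replace the closing condition by Linde's balance-point condition --- a closed convex curve has induced diffeomorphism (and inverse) with at least four balance points (Lemma \ref{LindeLem}) --- a condition robust enough to survive the direct method; (ii) restore coercivity not by gauge-fixing but by minimizing $\mathcal{E}_G^*$ with the value $\int_{\mathbb{S}^1}(\phi')^2\,\mathrm{d}\theta=\gamma$ held fixed, so that one is minimizing a Dirichlet energy and Rellich compactness applies; (iii) show by a folding/symmetrization argument (Proposition \ref{BoundaryLowBndProp}, which reduces to the $I$-symmetric case, itself settled by Benguria--Loss's observation plus the Poincar\'e inequality, Proposition \ref{SymCurvesOvals}) that the minimizer cannot sit on the boundary of the balance-point class, hence is an interior critical point and solves the ODE; (iv) classify the ODE solutions (Proposition \ref{ODEProp}): the $n=1$ solutions are exactly the escaping family and have only two balance points, so they are excluded, while $n\geq 2$ forces $\mathcal{E}_G^*\geq 0$ with equality only for M\"obius; finally (v) the statement for $\mathcal{E}_G$ follows from the Schwarzian duality $\mathcal{E}_G[\sigma]=\frac{L(\sigma)}{2\pi}\mathcal{E}_G^*[\sigma^*]$ (Proposition \ref{DualGProp}) applied to $\phi_\sigma^{-1}$, which also has four balance points. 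Steps (i)--(iv) are the content your proposal is missing; without an ingredient playing the role of the balance points and the fixed-$\gamma$ minimization, the degeneration along $\mathcal{O}$ (and along the $n=1$ family) cannot be ruled out.
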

To the best of our knowledge both inequalities are new.
Clearly, 
\[\mathcal{E}_G[\sigma]=\mathcal{E}_S[\sigma, \kappa^{-1/2}_\sigma],
\]
 and so the non-negativity of \eqref{OneIneq} would follow
from the
non-negativity of \eqref{MainIneq}.  Hence, Theorem \ref{MainThm} provides evidence for the
conjecture of Benguria and Loss.

We also discuss the natural symmetry of these functionals. To do so we need appropriate domains for the functionals.  To that end, we say a 
(possibly open) smooth planar curve is \emph{degree-one} if its unit tangent map is a degree one map from $\mathbb{S}^1$ to 
$\mathbb{S}^1$ 
-- for instance, any closed convex curve. A degree-one curve is \emph{strictly convex} if the unit tangent map is a diffeomorphism.    We 
show {(see \S3.3) that} there are natural (left and right) actions of 
$\mathrm{SL}(2,\Real)$ on $\mathcal{D}^\infty$, the space of {smooth,  degree-one curves} and on $\mathcal{D}_+^\infty$, the space of smooth 
strictly convex degree-one curves, which preserve the functionals. 
\begin{thm}\label{MainSymThm}
There are actions of $\mathrm{SL}(2,\Real)$ on $\mathcal{D}^\infty\times C^\infty$, the domain of $\mathcal{E}_S$, and on $\mathcal{D}^\infty_+$ the 
domain of $\mathcal{E}_G$ and $\mathcal{E}_G^*$ 
so that for $L\in \mathrm{SL}(2,\Real)$
\begin{equation*}
\begin{array}{cccc}
 \mathcal{E}_S[ (\sigma, f)\cdot L]=\mathcal{E}_S[\sigma,f], &
 \mathcal{E}_G[ \sigma\cdot L]=\mathcal{E}_G[\sigma],
& \mbox{and} &
 \mathcal{E}_G^*[ L\cdot \sigma]=\mathcal{E}_G^*[\sigma].
 \end{array}
\end{equation*}
Furthermore, there is an involution 
$*:\mathcal{D}_+^\infty\to \mathcal{D}_+^\infty$ so that 
\begin{equation*}
\begin{array}{ccc} *(L \cdot \sigma)= *\sigma\cdot L^{-1} & \mbox{and} & \mathcal{E}_G[*\sigma]=\frac{L(\sigma)}{2\pi} 
\mathcal{E}_G^*[\sigma]. 
\end{array}
\end{equation*}
\end{thm}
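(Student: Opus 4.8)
The plan is to pass to the classical support-function picture and reduce all three functionals to a common model on the circle. Parametrise a strictly convex degree-one curve $\sigma$ by its tangent angle $\theta\in\mathbb{S}^1$ and write $\rho=\kappa_\sigma^{-1}>0$ for the radius of curvature, so that $\mathrm{d}s=\rho\,\mathrm{d}\theta$ and $\nabla_\sigma=\rho^{-1}\partial_\theta$; since no closure is imposed on the elements of $\mathcal{D}_+^\infty$, the function $\rho$ ranges over all positive smooth functions on $\mathbb{S}^1$, which is exactly what will let the two $\mathrm{SL}(2,\Real)$-actions and the involution operate freely. A direct substitution gives the flat forms
\begin{equation*}
\mathcal{E}_S[\sigma,f]=\int_{\mathbb{S}^1}\frac{f_\theta^2+f^2}{\rho}\,\mathrm{d}\theta-\frac{(2\pi)^2}{L(\sigma)^2}\int_{\mathbb{S}^1}\rho f^2\,\mathrm{d}\theta,\qquad L(\sigma)=\int_{\mathbb{S}^1}\rho\,\mathrm{d}\theta,
\end{equation*}
\begin{equation*}
\mathcal{E}_G[\sigma]=\int_{\mathbb{S}^1}\frac{\rho_\theta^2}{4\rho^2}\,\mathrm{d}\theta+2\pi-\frac{(2\pi)^2}{L(\sigma)^2}\int_{\mathbb{S}^1}\rho^2\,\mathrm{d}\theta,\qquad \mathcal{E}_G^*[\sigma]=\int_{\mathbb{S}^1}\Big(\frac{\rho_\theta^2}{4\rho^3}-\frac1\rho\Big)\,\mathrm{d}\theta+\frac{(2\pi)^2}{L(\sigma)},
\end{equation*}
the first of which specialises to the expression for $\mathcal{E}_G$ upon setting $f=\kappa_\sigma^{-1/2}=\rho^{1/2}$.

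I would treat $\mathcal{E}_S$ first, as it generates the case of $\mathcal{E}_G$. Recall from \S3.3 that $g\in\mathrm{SL}(2,\Real)\cong\mathrm{SU}(1,1)$ acts on the tangent-angle circle by an orientation-preserving M\"obius diffeomorphism $\bar\theta=\bar\theta(\theta)$, and on $\rho$ and $f$ through the tensor-density weights fixed there. The one structural input driving the computation is that for such a diffeomorphism the reciprocal derivative $P=(\mathrm{d}\bar\theta/\mathrm{d}\theta)^{-1}$ is a degree-one trigonometric polynomial, equivalently $P_{\theta\theta}+P$ is constant; this is precisely the statement that these reparametrisations preserve the flat projective structure on $\mathbb{S}^1$ carried by the Hill operator $\partial_\theta^2+1$, and it is what upgrades the mere diffeomorphism-covariance of the leading term to genuine invariance once the zeroth-order pieces are included. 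I would therefore substitute $\bar\theta=\bar\theta(\theta)$ into the flat form of $\mathcal{E}_S$ and verify, using $P_{\theta\theta}+P=\mathrm{const}$ to absorb the derivatives of $P$ produced by the chain rule, that $\mathcal{E}_S$ is unchanged; note that $L(\sigma)$ is automatically preserved because $\rho\,\mathrm{d}\theta$ transforms as the pullback of the length measure. Invariance of $\mathcal{E}_G$ under the right action then follows formally, since the action is arranged so that its $f$-slot sends $\kappa_\sigma^{-1/2}$ to $\kappa_{\sigma\cdot L}^{-1/2}$, whence $\mathcal{E}_G[\sigma\cdot L]=\mathcal{E}_S[(\sigma,\kappa_\sigma^{-1/2})\cdot L]=\mathcal{E}_S[\sigma,\kappa_\sigma^{-1/2}]=\mathcal{E}_G[\sigma]$.

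Next I would construct the involution explicitly in the flat model. Reparametrise $\sigma$ by rescaled arclength $\tilde\theta=\tfrac{2\pi}{L(\sigma)}\int_0^\theta\rho\,\mathrm{d}\theta'$, which is a degree-one diffeomorphism of $\mathbb{S}^1$, and declare $*\sigma$ to be the strictly convex degree-one curve whose radius of curvature, as a function of $\tilde\theta$, equals $1/\rho$; equivalently, $*$ interchanges curvature with its reciprocal while exchanging the tangent-angle circle with the arclength circle. A short computation gives $**\sigma=\sigma$ and $L(*\sigma)=(2\pi)^2/L(\sigma)$. Changing variables from $\tilde\theta$ back to $\theta$ in $\mathcal{E}_G[*\sigma]$ by means of $\mathrm{d}\tilde\theta=\tfrac{2\pi}{L(\sigma)}\rho\,\mathrm{d}\theta$ and $\partial_{\tilde\theta}=\tfrac{L(\sigma)}{2\pi}\rho^{-1}\partial_\theta$ turns each of the three terms into $\tfrac{L(\sigma)}{2\pi}$ times the corresponding term of $\mathcal{E}_G^*[\sigma]$, establishing $\mathcal{E}_G[*\sigma]=\tfrac{L(\sigma)}{2\pi}\mathcal{E}_G^*[\sigma]$. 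For the equivariance I would use that the left and right actions of \S3.3 are the M\"obius actions on the two parametrisations that $*$ interchanges, so that conjugation by the involution carries one into the other with the group element inverted, giving $*(L\cdot\sigma)=*\sigma\cdot L^{-1}$.

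Finally, the invariance of $\mathcal{E}_G^*$ under the left action drops out of the relations already in hand. Writing $L\cdot\sigma=*\big((*\sigma)\cdot L^{-1}\big)$ from the equivariance, applying the scaling identity twice, using the right-invariance of $\mathcal{E}_G$ on $(*\sigma)\cdot L^{-1}$, and recording that both actions preserve length—immediate from $L(*\mu)=(2\pi)^2/L(\mu)$ together with length-preservation of the right action—one obtains $\mathcal{E}_G^*[L\cdot\sigma]=\mathcal{E}_G^*[\sigma]$ after a short chain of substitutions. The main obstacle throughout is the second paragraph: the leading term of $\mathcal{E}_S$ is not invariant under an arbitrary reparametrisation, and it is only the precise zeroth-order terms (the $+f^2$ in the numerator, and the additive constants $2\pi$ and $(2\pi)^2/L(\sigma)$) together with the projective identity $P_{\theta\theta}+P=\mathrm{const}$ that conspire to cancel the anomalous contributions; securing this cancellation, and fixing the density weights in \S3.3 so that it occurs, is the crux of the argument.
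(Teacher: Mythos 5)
Your overall architecture is close to the paper's and much of it is correct: the flat forms of the three functionals are right; your involution is exactly the paper's dual curve $\sigma^*$ (defined by $\phi_{\sigma^*}=\phi_\sigma^{-1}$) rewritten in the support-function picture, up to a harmless homothety (the paper normalizes $L(\sigma^*)=L(\sigma)$, which does not matter since $\mathcal{E}_G$ is scale invariant), and your term-by-term change of variables does prove $\mathcal{E}_G[*\sigma]=\frac{L(\sigma)}{2\pi}\mathcal{E}_G^*[\sigma]$ — this is Proposition \ref{DualGProp}, which the paper proves instead via the Schwarzian co-cycle; the equivariance $*(L\cdot\sigma)=*\sigma\cdot L^{-1}$ and the final transfer of invariance between $\mathcal{E}_G$ and $\mathcal{E}_G^*$ are also fine (the paper runs that last step in the opposite direction, proving left-invariance of $\mathcal{E}_G^*$ directly and deducing right-invariance of $\mathcal{E}_G$ by duality).

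However, the step you yourself identify as the crux rests on a false identity. For the action actually defined in \S3, $\Gamma(L)\colon \xX\mapsto L\cdot\xX/|L\cdot\xX|$, one computes $\mathrm{d}\bar\theta/\mathrm{d}\theta=|L\cdot\xX|^{-2}$, so $P=(\mathrm{d}\bar\theta/\mathrm{d}\theta)^{-1}=|L\cdot(\cos\theta,\sin\theta)|^2$ is a \emph{frequency-two} trigonometric polynomial: $P_{\theta\theta}+4P=\mathrm{const}$, which is equivalent to the paper's characterization \eqref{SchwarzianMobS1eqn}, $S_\theta(\varphi)+2(\varphi')^2-2=0$. Your condition $P_{\theta\theta}+P=\mathrm{const}$ (frequency one) instead characterizes the boundary action of $\mathrm{SU}(1,1)$ on the circle viewed as $\partial \mathbb{D}$; on the tangent-angle circle those diffeomorphisms are, up to rotation, precisely the paper's family \eqref{BadFamily}, and the functionals are \emph{not} invariant under them: by Proposition \ref{ODEProp} with $n=1$ one has $\mathcal{E}_G^*[\phi_\lambda]=\frac{3\pi}{2}-\frac{3}{4}\gamma<0$ for $\lambda\neq 1$, while $\mathcal{E}_G^*[\mathrm{id}]=0$ — this is exactly the paper's remark that its inequality fails along \eqref{BadFamily}. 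So if you execute your substitution with $P_{\theta\theta}+P=\mathrm{const}$, the anomalous terms will not cancel, and no adjustment of density weights can save it (for $\mathcal{E}_G^*$ there is no $f$-slot to tune; the non-invariance under the frequency-one family is a fact about the curves themselves). The source of the error is the double cover $T\colon\mathbb{S}^1\to\RP^1$: the kernel of the Hill operator $\partial_\theta^2+1$ is spanned by $\cos\theta,\sin\theta$, but the reciprocal derivatives $1/\varphi'$ of the relevant M\"obius maps lie in the span of \emph{products} of kernel elements, $\set{1,\cos 2\theta,\sin 2\theta}$; this factor of two is recorded in the paper as $S_{\mathbb{S}^1,\RP^1}(T)=2\,\mathrm{d}\theta^2$ in Appendix \ref{AppB}. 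Replacing your identity by the frequency-two one (equivalently by \eqref{SchwarzianMobS1eqn}) and redoing the cancellation — which is precisely the computation in the paper's Proposition \ref{SymProp} — repairs the argument.
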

We observe that $\mathcal{O}$ is precisely the orbit of the round circle under these actions. Generically, the action does not 
preserve the condition of being a closed curve. Indeed, the image of the set of closed curves 
under this action is an open set in the space of curves and so is not well suited for the direct method in the calculus of variations.  
Arguably, this is the source of the difficulty in answering Benguria and Loss's conjecture.  Indeed, we prove Theorem~\ref{MainThm} in part 
by overcoming 
it.

\section{Preliminaries}
Denote by $\mathbb{S}^1=\set{x_1^2+x_2^2=1}\subset \Real^2$ the unit circle in $\Real^2$.  Unless otherwise stated, we always assume 
that 
$\mathbb{S}^1$ 
inherits the standard orientation from $\Real^2$ and consider $\mathrm{d}\theta$ to be the associated volume form and $\partial_\theta$ 
the dual 
vector field.  Abusing notation slightly, let $\theta:\mathbb{S}^1\to [0,2\pi )$ be the compatible chart with 
$\theta(\eE_1)=0$.  Let $\pi: \Real\to \mathbb{S}^1$ be the covering map so that $\pi^*\mathrm{d}\theta=\mathrm{d}x$ and $\pi(0)=\eE_1$ -- 
here $x$ is the 
usual coordinate on $\Real$.  Denote by $I: \mathbb{S}^1\to \mathbb{S}^1$ the involution given by $I(p)=-p$.  Hence, 
$\theta(I(p))=\theta(p)+\pi \mod 2\pi$. 

\begin{defn}\label{degree-onedef} An immersion $\sigma:[0, 
2\pi ]\to \Real^2$ is a \emph{degree-one curve of class $C^{k+1,\alpha}$}, if there is
\begin{itemize}
 \item a degree-one map $\TT_\sigma:\mathbb{S}^1\to \mathbb{S}^1$ of class $C^{k,\alpha}$, the \emph{unit tangent map of $\sigma$},
 \item a point $\xX_\sigma\in \Real^2$, the \emph{base point of $\sigma$}, and
 \item a value $L(\sigma)>0$, the \emph{length of $\sigma$}, 
\end{itemize}
so that
\begin{equation*}
 \sigma(t)=\xX_\sigma+\frac{L(\sigma)}{2\pi}\int_0^t \TT_\sigma(\pi(x)) \; \mathrm{d}x.
\end{equation*}
The curve $\sigma$ is \emph{strictly 
convex} provided the unit tangent map $\TT_\sigma$ has a $C^{k,\alpha}$ inverse and is \emph{closed} provided $\sigma(0)=\sigma(2\pi)$.
\end{defn}
A degree-one curve, $\sigma$, is uniquely determined by the data $(\TT_\sigma, \xX_\sigma, L(\sigma))$.
Denote by $\mathcal{D}^{k+1,\alpha}$ the set of degree-one curves of class 
$C^{k+1,\alpha}$ and by $\mathcal{D}^{k+1,\alpha}_+\subset \mathcal{D}^{k+1,\alpha}$ the set of strictly convex degree-one curves of of 
class 
$C^{k+1,\alpha}$.
The length element associated to $\sigma$ is $\mathrm{d}s=\frac{L(\sigma)}{2\pi} \mathrm{d}x=\frac{L(\sigma)}{2\pi}\pi^* \mathrm{d}\theta=\pi^*\widetilde{\mathrm{d}s}$.
If $\sigma\in \mathcal{D}^2$, then the geodesic curvature, $\kappa_\sigma$, of $\sigma$ satisfies 
$\kappa_\sigma=\pi^*\tilde{\kappa}_\sigma$ where $\tilde{\kappa}_\sigma\in C^{k-1,\alpha}(\mathbb{S}^1)$ satisfies 
$$\int_{\mathbb{S}^1} \tilde{\kappa}_\sigma \; \widetilde{\mathrm{d}s}= 
2\pi.$$
Conversely, given such a $\kappa_\sigma$ there is a degree-one curve with geodesic curvature $\kappa_\sigma$.
Abusing notation slightly, we will not distinguish between $\mathrm{d}s$ and $\widetilde{\mathrm{d}s}$ and between $\kappa_\sigma$ and 
$\tilde{\kappa}_\sigma$. Clearly, $\sigma \in\mathcal{D}^2_+$ if and only if 
$\kappa_\sigma>0$.  

The \emph{standard parameterization of $\mathbb{S}^1$} is given by the data 
$(\TT_0, \eE_1, 2\pi)$ where
\[ 
 \TT_0(p)=-\sin(\theta(p))\eE_1+\cos(\theta(p))\eE_2.
\]
Let $\Diff_+^{k,\alpha}(\mathbb{S}^1)$ denote the orientation preserving diffeomorphisms of $\mathbb{S}^1$ of class $C^{k,\alpha}$ -- 
that 
is bijective maps of class $C^{k,\alpha}$ with inverse of class $C^{k,\alpha}$.  Endow this space with the usual $C^{k,\alpha}$ 
topology. For $\sigma\in \mathcal{D}_+^{k+1,\alpha}$, we call the map 
$$
\phi_{\sigma}=\TT_{0}^{-1}\circ \TT_{\sigma}
$$ the \emph{induced diffeomorphism of $\sigma$}. Clearly, the induced diffeomorphism of the standard parameterization of $\mathbb{S}^1$
is the identity map. 

For $f \in C^k(\SS)$, let $f'=\partial_{\theta}f$, $f^{\prime\prime}=(f^{\prime})^{\prime}$ and likewise for higher order 
derivatives.  Observe that, for $\phi \in \mathrm{Diff}^1_+(\SS)$, we have $\phi'>0$ where $\phi'\in C^0(\mathbb{S}^1)$ satisfies 
$\phi^*\mathrm{d} \theta=\phi'\mathrm{d}\theta$.  A simple computation shows that if $\sigma\in \mathcal{D}_+^2$, then 
$\phi'_\sigma= {\kappa_\sigma} \frac{L(\sigma)}{2\pi} $.

\section{Symmetries of the functionals} \label{ProjGeomApp}

Consider the group homomorphism $\Gamma: \mathrm{SL}(2,\Real)\to \Diff^\infty_+(\mathbb{S}^1)$ given by 
\begin{equation*}
 \Gamma(L) = \xX\mapsto \frac{L\cdot \xX}{|L\cdot \xX|}
\end{equation*}
where $\xX\in \mathbb{S}^1$ and $L\in  \mathrm{SL}(2,\Real)$.~
Denote the image of $\Gamma$ by $\Mob(\mathbb{S}^1)$ which we refer to as the M\"{o}bius group of $\mathbb{S}^1$.  
One computes that
\begin{equation*}
 \TT_0\circ \Gamma(L)=\frac{ L \cdot \TT_0}{|L \cdot \TT_0|}.
\end{equation*}
These are precisely the unit tangent maps of the ovals of \cite{MR2091490}.
That is,
\begin{equation*}
 \mathcal{O}=\set{\sigma\in \mathcal{D}_+^\infty: \phi_\sigma\in \Mob(\mathbb{S}^1)}.
\end{equation*}

\subsection{The Schwarzian derivative}
For $\phi\in \Diff_+^3(\mathbb{S}^1)$ the \textit{Schwarzian derivative} of $\phi$ is defined as 
\begin{equation*}
S_{\theta}(\phi)=\frac{\phi^{\prime\prime\prime}}{\phi^{\prime}}-\frac{3}{2}\left(\frac{\phi^{\prime\prime}}{\phi^{\prime}}\right)^2,
\end{equation*}
where primes denote derivatives with respect to $\theta$. A fundamental feature of the Schwarzian derivative is that it satisfies the 
following co-cycle property 
\begin{equation} \label{CocycleCondBasic}
S_\theta(\phi\circ \psi)=\left(S_\theta(\phi)\circ \psi\right)\cdot (\psi^{\prime})^2+S_{\theta}(\psi),
\end{equation}
where $\phi,\psi\in \Diff_+^3(\mathbb{S}^1)$. 
After some computation, one verifies that the Schwarzian derivative gives the following intrinsic characterization of 
$\Mob(\mathbb{S}^1)$
\begin{equation}
 \label{SchwarzianMobS1eqn}
 \phi \in \Mob(\mathbb{S}^1) \iff S_\theta(\phi)+2(\phi')^2-2 =0.
\end{equation}

The Schwarzian derivative arises most naturally in the context of projective differential geometry.  This perspective also gives a 
conceptual proof of \eqref{SchwarzianMobS1eqn}. For this proof as well as the necessary background the reader may consult the 
Appendix~\ref{AppB} as well as the references cited there.

\subsection{Projective Symmetries}
We now describe the natural symmetries of \eqref{MainIneq}, \eqref{OneIneq} and \eqref{TwoIneq}.  We also introduce a notion of duality for 
strictly convex degree-one curves -- this duality will streamline some of the arguments. 
For $\sigma\in \mathcal{D}^{k+1,\alpha}$, define the \emph{dual curve},  $\sigma^*\in  \mathcal{D}^{k+1,\alpha}$
to be the unique curve with
\begin{equation*}\begin{array}{cccc}
 \phi_{\sigma^*}=\phi_{\sigma}^{-1}, &   \xX_{\sigma^*}= 
\xX_{\sigma} & \mbox{and} & L(\sigma^*)={L(\sigma)}.\end{array}
\end{equation*}
That is, $\sigma^*$ is the curve whose induced diffeomorphism is the inverse to the induced diffeomorphism of $\sigma$.  Clearly,  
$(\sigma^*)^*=\sigma$.
To proceed further, we note that the functionals $\eqref{OneIneq}$ and $\eqref{TwoIneq}$ can, by integrating by parts, be made to 
naturally involve the Schwarzian derivative. 
To see this fix  $\sigma \in \mathcal{D}_+^{4}$ with
$L(\sigma)=2\pi$.  As $\kappa_\sigma=\phi_\sigma'$,  
\begin{equation} \label{OneIneqSchwarzian}
 \begin{aligned}
\mathcal{E}_G[\sigma] &=\int_{\mathbb{S}^1} \frac{(\phi_\sigma'')^2}{4(\phi_\sigma')^3} -\frac{1}{\phi_\sigma'}+\phi_\sigma' \; \mathrm{d}\theta \\
 &=\int_{\mathbb{S}^1} \left( \frac{\phi_\sigma''}{2(\phi_\sigma')^3}\right)' \phi_\sigma'+\frac{3(\phi_\sigma'')^2}{4(\phi_\sigma')^3} 
-\frac{1}{\phi_\sigma'}+\phi_\sigma' \; \mathrm{d}\theta \\
&=\frac{1}{2} \int_{\mathbb{S}^1} \frac{S_{\theta}(\phi_\sigma) +2(\phi_\sigma')^2-2}{\phi_\sigma'} \; \mathrm{d}\theta,
\end{aligned}
\end{equation}
where the second equality follows by integrating by parts.  
Likewise,
\begin{equation} \label{TwoIneqSchwarzian}
 \mathcal{E}_G^*[\sigma] =-\frac{1}{2} \int_{\mathbb{S}^1} S_{\theta}(\phi_\sigma) +2(\phi_\sigma')^2-2 \; \mathrm{d}\theta.
\end{equation}
An immediate consequence of this is the following useful fact,
\begin{prop} \label{DualGProp}
 For $\sigma \in \mathcal{D}_+^{4}$, we have $\mathcal{E}_G[\sigma]=\frac{L(\sigma)}{2\pi}\mathcal{E}_G^*[\sigma^*]$.
\end{prop}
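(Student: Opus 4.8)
The plan is to reduce to the normalized case $L(\sigma)=2\pi$ and then exploit the Schwarzian representations \eqref{OneIneqSchwarzian} and \eqref{TwoIneqSchwarzian} together with the co-cycle property \eqref{CocycleCondBasic}. First I would record how the two functionals and the duality interact with homotheties. Since the unit tangent map $\TT_\sigma$, and hence the induced diffeomorphism $\phi_\sigma$, is unchanged by a homothety while $L(\sigma)$ scales linearly, the definition $\phi_{\sigma^*}=\phi_\sigma^{-1}$, $L(\sigma^*)=L(\sigma)$ shows that duality commutes with scaling, i.e.\ $(c\,\sigma)^*=c\,\sigma^*$ for $c>0$, and preserves the relation $L(\sigma^*)=L(\sigma)$. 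Because $\mathcal{E}_G$ is scale invariant while $\mathcal{E}_G^*$ scales inversely with length, the asserted identity is unchanged under replacing $\sigma$ by $c\,\sigma$. Hence it suffices to prove it when $L(\sigma)=2\pi$, where it reduces to the cleaner statement $\mathcal{E}_G[\sigma]=\mathcal{E}_G^*[\sigma^*]$.

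Next, assuming $L(\sigma)=2\pi$, I would abbreviate $\psi=\phi_\sigma$, so that $\phi_{\sigma^*}=\psi^{-1}$ and $L(\sigma^*)=2\pi$ as well. Applying \eqref{OneIneqSchwarzian} to $\sigma$ and \eqref{TwoIneqSchwarzian} to $\sigma^*$ turns the goal into the purely analytic identity
\begin{equation*}
-\int_{\mathbb{S}^1} \left( S_\theta(\psi^{-1})+2\left((\psi^{-1})'\right)^2-2\right)\,\mathrm{d}\theta = \int_{\mathbb{S}^1} \frac{S_\theta(\psi)+2(\psi')^2-2}{\psi'}\,\mathrm{d}\theta .
\end{equation*}

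The main step, and the only place where anything beyond bookkeeping occurs, is to evaluate the left-hand integral by the substitution $\eta=\psi(\theta)$. Differentiating $\psi^{-1}\circ\psi=\mathrm{id}$ gives $(\psi^{-1})'\circ\psi=1/\psi'$, while feeding $\psi\circ\psi^{-1}=\mathrm{id}$ into the co-cycle property \eqref{CocycleCondBasic} (using $S_\theta(\mathrm{id})=0$) gives $S_\theta(\psi^{-1})=-(S_\theta(\psi)\circ\psi^{-1})\cdot((\psi^{-1})')^2$. Substituting these relations into the integrand, expressing everything in the variable $\theta$ and using $\mathrm{d}\eta=\psi'\,\mathrm{d}\theta$, transforms the left-hand side termwise into $\int_{\mathbb{S}^1}\left(S_\theta(\psi)/\psi' - 2/\psi' + 2\psi'\right)\,\mathrm{d}\theta$, which is exactly the right-hand side after expanding the quotient. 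This establishes $\mathcal{E}_G[\sigma]=\mathcal{E}_G^*[\sigma^*]$ at length $2\pi$, and the scaling reduction of the first paragraph then upgrades it to the general identity $\mathcal{E}_G[\sigma]=\frac{L(\sigma)}{2\pi}\mathcal{E}_G^*[\sigma^*]$. I expect the only delicate point to be the sign and Jacobian bookkeeping in this change of variables, since the Schwarzian of the inverse and the factor $(\psi^{-1})'$ each contribute powers of $\psi'$ that must cancel precisely against $\mathrm{d}\eta=\psi'\,\mathrm{d}\theta$.
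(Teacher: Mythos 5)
Your proposal is correct and follows essentially the same route as the paper: scale to $L(\sigma)=2\pi$, use the Schwarzian representations \eqref{OneIneqSchwarzian} and \eqref{TwoIneqSchwarzian}, the co-cycle property \eqref{CocycleCondBasic} applied to $\psi\circ\psi^{-1}=\mathrm{id}$, and a change of variables with Jacobian $\psi'$. The paper merely runs the identical computation in the opposite direction, starting from $\mathcal{E}_G[\sigma]$ and substituting the inverse-Schwarzian relation before changing variables.
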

\begin{proof}
By scaling we may assume that $L(\sigma)=2\pi$. Write $\psi_\sigma=\phi_\sigma^{-1}$.
The co-cycle property for the Schwarzian derivative implies that
\begin{equation*}
 S_\theta(\psi_\sigma)=-\frac{S_\theta(\phi_\sigma)\circ \phi_\sigma^{-1}}{(\phi'_\sigma\circ \phi^{-1}_\sigma)^2},
\end{equation*}
where we have used that 
\[
\phi_\sigma'=\frac{1}{\psi_\sigma'\circ \phi_\sigma}. 
\]
Hence, by \eqref{OneIneqSchwarzian} and \eqref{TwoIneqSchwarzian}
\begin{align*}
 \mathcal{E}_G[\sigma]&=\frac{1}{2} \int_{\mathbb{S}^1} \frac{S_{\theta}(\phi_\sigma) +2(\phi_\sigma')^2-2}{\phi_\sigma'} \; \mathrm{d}\theta\\
  &= -\frac{1}{2}\int_{\mathbb{S}^1}( S_\theta(\psi_\sigma) 
\circ \phi_\sigma) \phi'_\sigma+2 (\psi'_\sigma\circ 
\phi_\sigma)^2\phi'_\sigma-2   \phi'_\sigma \;\mathrm{d}\theta\\
 &=-\frac{1}{2} \int_{\mathbb{S}^1} S_{\theta}(\psi_\sigma) +2(\psi_\sigma')^2-2 \; \mathrm{d}\theta\\
 &=\mathcal{E}_G^*[\sigma^*].
\end{align*}
\end{proof}

We now may define the desired actions. Consider first the  right action of $\Mob(\mathbb{S}^1)$ on $\mathcal{D}^{k+1,\alpha}$, 
$ \sigma\cdot \varphi = \sigma'$ where $\sigma'\in \mathcal{D}^{k+1,\alpha}$ is the unique element with
\begin{equation*}\begin{array}{cccc}
 \TT_{\sigma'}=\TT_{\sigma}\circ \varphi, &   \xX_{\sigma'}= 
\xX_{\sigma} & \mbox{and} & L(\sigma')=L(\sigma).\end{array}
\end{equation*}
  Notice, that if $\sigma\in \mathcal{D}^{k+1,\alpha}_+$ is strictly convex, then so is $\sigma'$ and in this case 
we have that $\phi_{\sigma'}=\phi_{\sigma}\circ \varphi$.
With this in mind, we also consider a left action of $\Mob(\mathbb{S}^1)$ on $\mathcal{D}^{k+1,\alpha}_+$, $\varphi\cdot \sigma =\sigma'$,
where $\sigma'\in \mathcal{D}^{k+1,\alpha}$ is the unique element with
\begin{equation*}\begin{array}{cccc}
\phi_{\sigma'}=\varphi \circ \phi_{\sigma}, &   \xX_{\sigma'}= 
\xX_{\sigma} & \mbox{and} & L(\sigma')=L(\sigma).\end{array}
\end{equation*}
We observe that for $\sigma \in \mathcal{D}^{k+1}_+$ and $\varphi\in \Mob(\mathbb{S}^1)$,
\[
 \varphi \cdot \sigma^*= (\sigma \cdot \varphi^{-1})^*.
\]
Finally, we define a right action of $\Mob(\mathbb{S}^1)$ on $C^{k,\alpha}(\mathbb{S}^1)$
by
\begin{equation*}
 f\cdot \varphi=\left(\varphi'\right)^{-1/2} f\circ\varphi .
\end{equation*}
If we use $\mathrm{d}\theta$ to identify $C^\infty(\mathbb{S}^1)$ with $\Omega^{-1/2}(\mathbb{S}^1)$, then this 
is the natural pull-back action on $\Omega^{-1/2}(\mathbb{S}^1)$ -- the space of weight $-1/2$ 
densities (see Appendix~\ref{AppB}).
\begin{prop}\label{SymProp}
 For any $\varphi\in \Mob(\mathbb{S}^1)$, $\sigma\in \mathcal{D}^\infty$ and $f\in C^\infty(\mathbb{S}^1)$,
 \begin{equation*}
   \mathcal{E}_S[\sigma, f]=\mathcal{E}_S[ \sigma\cdot \varphi,  f\cdot \varphi].
 \end{equation*}
 Likewise, for any $\varphi\in \Mob(\mathbb{S}^1)$ and $\sigma \in \mathcal{D}^\infty_+$,
\begin{equation*}  \begin{array}{ccc}
  \mathcal{E}_G[\sigma]=\mathcal{E}_G[ \sigma\cdot \varphi] & \mbox{and} &
  \mathcal{E}_G^*[\sigma]=\mathcal{E}_G^*[\varphi\cdot \sigma].
 \end{array}
  \end{equation*}
\end{prop}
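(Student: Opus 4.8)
The plan is to treat the invariance of $\mathcal{E}_S$ as the fundamental case and to deduce the other two identities from it. Since each of the three actions fixes the length $L(\sigma)$, and since $\mathcal{E}_G$ is scale invariant while $\mathcal{E}_S$ and $\mathcal{E}_G^*$ are homogeneous in $L$, I would first rescale so that $L(\sigma)=2\pi$. In this normalization $\mathrm{d}s=\mathrm{d}\theta$, $\kappa_\sigma=\phi_\sigma'$ and $|\nabla_\sigma f|^2=(f')^2$, so that $\mathcal{E}_S[\sigma,f]=\int_{\mathbb{S}^1}(f')^2+Q_\sigma\,f^2\,\mathrm{d}\theta$ with $Q_\sigma=(\phi_\sigma')^2-1$. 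For the right action one has $\phi_{\sigma\cdot\varphi}=\phi_\sigma\circ\varphi$, hence the potential transforms as $Q_{\sigma\cdot\varphi}=(\phi_\sigma'\circ\varphi)^2(\varphi')^2-1=(Q_\sigma\circ\varphi)(\varphi')^2+\bigl((\varphi')^2-1\bigr)$, and by the M\"obius characterization \eqref{SchwarzianMobS1eqn} the ``anomaly'' satisfies $(\varphi')^2-1=-\tfrac12 S_\theta(\varphi)$.

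The heart of the argument is then to show that the kinetic term produces an equal and opposite Schwarzian contribution. Writing $g=f\cdot\varphi=(\varphi')^{-1/2}(f\circ\varphi)$, I would expand $(g')^2$ into three pieces and integrate the cross term by parts on $\mathbb{S}^1$ (no boundary terms appear). The pieces proportional to $(f\circ\varphi)^2$ then reorganize, after using the definition of $S_\theta(\varphi)$, into $\tfrac{1}{2}\,S_\theta(\varphi)\,(f\circ\varphi)^2/\varphi'$, while the remaining piece is $\varphi'(f'\circ\varphi)^2$; that is, $\int_{\mathbb{S}^1}(g')^2\,\mathrm{d}\theta=\int_{\mathbb{S}^1}\varphi'(f'\circ\varphi)^2+\tfrac{1}{2}S_\theta(\varphi)(f\circ\varphi)^2/\varphi'\,\mathrm{d}\theta$. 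On the other hand $Q_{\sigma\cdot\varphi}\,g^2=(Q_\sigma\circ\varphi)\varphi'(f\circ\varphi)^2-\tfrac12 S_\theta(\varphi)(f\circ\varphi)^2/\varphi'$, so the two Schwarzian terms cancel; a single substitution $u=\varphi(\theta)$ turns the surviving integrands into $(f')^2+Q_\sigma f^2$, proving $\mathcal{E}_S[\sigma\cdot\varphi,f\cdot\varphi]=\mathcal{E}_S[\sigma,f]$. I expect this bookkeeping to be the one genuinely delicate step: the non-tensorial, Schwarzian part of the kinetic energy must be extracted explicitly, and it is precisely the M\"obius condition \eqref{SchwarzianMobS1eqn} that forces it to annihilate the anomaly in the transformation of $Q_\sigma$ — a generic $\varphi\in\Diff^3_+(\mathbb{S}^1)$ would leave a residual term.

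The remaining two invariances then follow cheaply. For $\mathcal{E}_G$ I would use the identity $\mathcal{E}_G[\sigma]=\mathcal{E}_S[\sigma,\kappa_\sigma^{-1/2}]$ recorded in the introduction, together with the observation that $\kappa_{\sigma\cdot\varphi}^{-1/2}=\kappa_\sigma^{-1/2}\cdot\varphi$, since both sides equal $\bigl((\kappa_\sigma\circ\varphi)\varphi'\bigr)^{-1/2}$; the invariance of $\mathcal{E}_G$ under the right action is then immediate from that of $\mathcal{E}_S$. For $\mathcal{E}_G^*$ under the left action I would invoke Proposition~\ref{DualGProp}, rewritten (using $(\sigma^*)^*=\sigma$ and $L(\sigma^*)=L(\sigma)$) as $\mathcal{E}_G^*[\sigma]=\tfrac{2\pi}{L(\sigma)}\mathcal{E}_G[\sigma^*]$, the duality relation $(\varphi\cdot\sigma)^*=\sigma^*\cdot\varphi^{-1}$ obtained from $\varphi\cdot\sigma^*=(\sigma\cdot\varphi^{-1})^*$, and the length-invariance of the action, to get $\mathcal{E}_G^*[\varphi\cdot\sigma]=\tfrac{2\pi}{L(\sigma)}\mathcal{E}_G[\sigma^*\cdot\varphi^{-1}]=\tfrac{2\pi}{L(\sigma)}\mathcal{E}_G[\sigma^*]=\mathcal{E}_G^*[\sigma]$. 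As a cross-check (and an independent route) one can argue directly from \eqref{TwoIneqSchwarzian} and the cocycle property \eqref{CocycleCondBasic}: under $\phi_\sigma\mapsto\varphi\circ\phi_\sigma$ with $\varphi\in\Mob(\mathbb{S}^1)$ the integrand $S_\theta(\phi_\sigma)+2(\phi_\sigma')^2-2$ is in fact \emph{pointwise} invariant, so $\mathcal{E}_G^*$ is preserved with no change of variables at all, while the analogous computation for \eqref{OneIneqSchwarzian} under $\phi_\sigma\mapsto\phi_\sigma\circ\varphi$ reproduces the $\mathcal{E}_G$ invariance after the substitution $u=\varphi(\theta)$.
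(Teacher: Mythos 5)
Your proposal is correct, and its core computation --- the invariance of $\mathcal{E}_S$ --- is essentially the paper's own proof: the same expansion of $(f\cdot\varphi)'$, the same integration by parts on $\mathbb{S}^1$, and the same appeal to \eqref{SchwarzianMobS1eqn} to kill the Schwarzian term (the paper organizes it by showing $\int_{\mathbb{S}^1}(f')^2-f^2\,\mathrm{d}\theta$ is invariant and treating $\kappa_\sigma^2f^2\,\mathrm{d}\theta$ separately as a tensorial term, whereas you fold $\kappa_\sigma^2-1$ into one potential $Q_\sigma$ and cancel the two Schwarzian anomalies against each other; this is the same calculation, differently bookkept). Where you genuinely diverge is in how the remaining two identities are deduced. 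The paper proves the $\mathcal{E}_G^*$ invariance \emph{directly}: by the cocycle property \eqref{CocycleCondBasic} and \eqref{SchwarzianMobS1eqn}, the integrand $S_\theta(\phi_\sigma)+2(\phi_\sigma')^2-2$ of \eqref{TwoIneqSchwarzian} is unchanged under $\phi_\sigma\mapsto\varphi\circ\phi_\sigma$ --- exactly your ``cross-check'' --- and then it gets the $\mathcal{E}_G$ invariance from Proposition \ref{DualGProp} via $\mathcal{E}_G[\sigma\cdot\varphi]=\mathcal{E}_G^*[(\sigma\cdot\varphi)^*]=\mathcal{E}_G^*[\varphi^{-1}\cdot\sigma^*]$. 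You run the logic in the opposite direction: $\mathcal{E}_G$ follows from the $\mathcal{E}_S$ invariance via $\mathcal{E}_G[\sigma]=\mathcal{E}_S[\sigma,\kappa_\sigma^{-1/2}]$ together with the (correct, and easily checked) equivariance $\kappa_{\sigma\cdot\varphi}^{-1/2}=\kappa_\sigma^{-1/2}\cdot\varphi$, both sides being $\bigl((\kappa_\sigma\circ\varphi)\varphi'\bigr)^{-1/2}$; only then do you invoke duality, in the reversed form $\mathcal{E}_G^*[\sigma]=\tfrac{2\pi}{L(\sigma)}\mathcal{E}_G[\sigma^*]$ with $(\varphi\cdot\sigma)^*=\sigma^*\cdot\varphi^{-1}$, to reach $\mathcal{E}_G^*$. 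Both routes are valid, and every ingredient you use is established in the paper. Yours buys economy and conceptual clarity: a single hard computation drives all three symmetries, and it exhibits the $\mathcal{E}_G$ symmetry as nothing but the $\mathcal{E}_S$ symmetry specialized at the distinguished density $\kappa_\sigma^{-1/2}$. The paper's route buys a proof of the $\mathcal{E}_G^*$ invariance that is independent of the $\mathcal{E}_S$ computation entirely --- one line of pointwise algebra --- which is convenient since $\mathcal{E}_G^*$ is the functional on which the later variational argument is carried out.
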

\begin{proof}
By scaling, it suffices to take $L(\sigma)=2\pi$ so $\mathrm{d}s=\mathrm{d}\theta$.
Set
 \begin{equation*}
   f_{\varphi}'=(\varphi')^{1/2} f'\circ \varphi -\frac{1}{2} \frac{\varphi''}{(\varphi')^{3/2}} f\circ\varphi. 
 \end{equation*}
We show the first symmetry by computing, 
 \begin{align*}
   (f_{\varphi}')^2 &=(f'\circ\varphi)^2 \varphi'- \frac{\varphi''}{\varphi'} (f'\circ\varphi)( f\circ\varphi) +\frac{1}{4} 
\frac{(\varphi'')^2}{(\varphi')^{3}}(f\circ\varphi )^2\\
   &=(f'\circ\varphi)^2 \varphi'- \frac{1}{2} \frac{\varphi''}{(\varphi')^{2}}\partial_\theta \left( f\circ\varphi \right)^2 + 
\frac{1}{4} 
\frac{(\varphi'')^2}{(\varphi')^{2}}f_{\varphi}^2\\
   &=(f'\circ\varphi)^2 \varphi'-  \partial_\theta \left( \frac{\varphi'' (f\circ\varphi)^2 }{2 (\varphi')^{2}}  
\right)+ \left( \frac{\varphi''}{2(\varphi')^{2}}\right)' (f\circ\varphi)^2+ \frac{1}{4} 
\frac{(\varphi'')^2}{(\varphi')^{2}}f_{\varphi}^2\\   
   &=(f'\circ\varphi)^2 \varphi'- \partial_\theta \left( \frac{\varphi'' (f\circ\varphi)^2 }{2 (\varphi')^{2}}  
\right) + 
\frac{1}{2} 
S_\theta(\varphi) f_{\varphi}^2 \\  
   &=(f'\circ\varphi)^2 \varphi'- \partial_\theta \left( \frac{\varphi'' (f\circ\varphi)^2 }{2 (\varphi')^{2}}  
\right) + 
\left( 
1-(\varphi')^2\right)f_{\varphi}^2 +\frac{1}{2}  
(S_\theta(\varphi)+2(\varphi')^2-2)f_{\varphi}^2 \\
  &=(f'\circ\varphi)^2 \varphi'- \partial_\theta \left( \frac{\varphi'' (f\circ\varphi)^2 }{2 (\varphi')^{2}}  
\right) + 
 \left( 
1-(\varphi')^2\right)f_{\varphi}^2.  
 \end{align*}
 The last equality used $\varphi\in \Mob(\mathbb{S}^1)$ and \eqref{SchwarzianMobS1eqn}.
Integrating by parts gives,
\begin{align*}
  \int_{\mathbb{S}^1}(f_{\varphi}')^2 -f_{\varphi}^2\; \mathrm{d}\theta =\int_{\mathbb{S}^1}(f'\circ \varphi)^2 \varphi'  
-(f\circ\varphi )^2\varphi' \; \mathrm{d}\theta.
\end{align*}
Hence, after a change of variables
\begin{align*}
  \int_{\mathbb{S}^1}(f_{\varphi}')^2  -f_{\varphi}^2 \;\mathrm{d}\theta =\int_{\mathbb{S}^1}(f')^2  -f^2  \;\mathrm{d}\theta.
\end{align*}
Finally,
\begin{equation*}
 (\kappa_{\varphi} f_{\varphi})^2= \kappa_{\varphi}(\varphi(\theta))^2 f\circ\varphi ^2 \varphi'
\end{equation*}
and so a change of variables gives, 
\begin{equation*}
 \int_{\mathbb{S}^1} (\kappa_{\varphi} f_{\varphi})^2 \;\mathrm{d}\theta = \int_{\mathbb{S}^1} \kappa^2 f^2  \;\mathrm{d}\theta.
\end{equation*}
That is, $ \mathcal{E}_S[\sigma, f]=\mathcal{E}_S[ \sigma\cdot \varphi,  f\cdot \varphi]$.

The co-cycle property of the Schwarzian and \eqref{TwoIneqSchwarzian} immediately implies
\begin{align*}
 \mathcal{E}_G^*[\varphi \cdot \sigma]&=-\frac{1}{2}\int_{\mathbb{S}^1} S_\theta(\varphi\circ \phi_\sigma)+2 ((\varphi\circ\phi_\sigma)' 
)^2-2 \; \mathrm{d}\theta\\
  &=-\frac{1}{2}\int_{\mathbb{S}^1} S_\theta( \phi_\sigma)-2 (\phi_\sigma')^2 (\varphi'\circ \phi_\sigma)^2+2(\phi_\sigma')^2+2 
(\phi_\sigma')^2(\varphi'\circ\phi_\sigma )^2-2 \; \mathrm{d}\theta\\
  &=\mathcal{E}_G^*[\sigma]
\end{align*}
Finally, using Proposition \ref{DualGProp}
\begin{align*}
 \mathcal{E}_G[\sigma\cdot \varphi] &=\mathcal{E}_G^*[(\sigma\cdot \varphi)^*]= \mathcal{E}_G^*[\varphi^{-1} \cdot 
\sigma^*]=\mathcal{E}_G^*[\sigma^*]=\mathcal{E}_G[\sigma].
\end{align*}
\end{proof}
Theorem \ref{MainSymThm} is an immediate consequence of Propositions \ref{DualGProp} and \ref{SymProp} and the fact that 
$\Mob(\mathbb{S}^1)$ is isomorphic to 
$\mathrm{SL}(2,\Real)$.

As a final remark, we observe that we may extend the duality operator to $\mathcal{D}_+^{\infty} \times C^\infty(\mathbb{S}^1)$ and define a 
natural dual functional to $\mathcal{E}_S$.  Namely, set 
\begin{equation*}
 \begin{array}{ccc}(\sigma, f)^*=(\sigma^*, f\circ \phi_\sigma^{-1}) & \mbox{and} & \mathcal{E}_S^*[\sigma, f]=\int_{\sigma} \frac{|\nabla_\sigma f|^2}{\kappa_\sigma} -\kappa_\sigma f^2 +\frac{(2\pi)^2}{L(\sigma)^2} 
\frac{f^2}{ \kappa_\sigma} \; \mathrm{d}s.
\end{array}
\end{equation*}
We then have,
\begin{prop}\label{DualSProp}
If $\sigma\in \mathcal{D}_+^{\infty}$ and $f\in C^\infty(\mathbb{S}^1)$, then
\begin{equation*}
 \mathcal{E}_S[(\sigma, f)^*]=\frac{L(\sigma)}{2\pi}\mathcal{E}_S^*[\sigma,f].
\end{equation*}
\end{prop}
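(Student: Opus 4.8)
The plan is to follow the template of Proposition~\ref{DualGProp}: reduce to $L(\sigma)=2\pi$ by scaling, rewrite $\mathcal{E}_S$ and $\mathcal{E}_S^*$ as integrals over $\mathbb{S}^1$ in the variable $\theta$, and then pass from one to the other by the single change of variables coming from $\phi_\sigma$. The entire identity reduces to one substitution, so the real content is bookkeeping rather than any new idea.

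First I would put both functionals in intrinsic form. Using $\mathrm{d}s=\tfrac{L(\sigma)}{2\pi}\,\mathrm{d}\theta$, $\kappa_\sigma=\tfrac{2\pi}{L(\sigma)}\phi_\sigma'$ and $|\nabla_\sigma f|^2=\tfrac{(2\pi)^2}{L(\sigma)^2}(f')^2$, a direct computation gives, for $L(\sigma)=2\pi$,
\[
\mathcal{E}_S[\sigma,f]=\int_{\mathbb{S}^1}(f')^2+\bigl((\phi_\sigma')^2-1\bigr)f^2\,\mathrm{d}\theta,
\qquad
\mathcal{E}_S^*[\sigma,f]=\int_{\mathbb{S}^1}\frac{(f')^2}{\phi_\sigma'}+\frac{f^2}{\phi_\sigma'}-\phi_\sigma' f^2\,\mathrm{d}\theta.
\]
These are the same $\theta$-expressions that underlie \eqref{OneIneqSchwarzian} and \eqref{TwoIneqSchwarzian}; strict convexity ($\kappa_\sigma>0$, equivalently $\phi_\sigma\in\Diff^\infty_+(\mathbb{S}^1)$) is exactly what makes both these manipulations and the passage to the dual curve legitimate.

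Next I would substitute the dual data into the first expression. Writing $\psi=\phi_\sigma^{-1}=\phi_{\sigma^*}$ and $g=f\circ\psi$, the definition of $(\sigma,f)^*$ gives
\[
\mathcal{E}_S[(\sigma,f)^*]=\mathcal{E}_S[\sigma^*,g]=\int_{\mathbb{S}^1}(g')^2+\bigl((\psi')^2-1\bigr)g^2\,\mathrm{d}\theta.
\]
Since $g'=(f'\circ\psi)\,\psi'$, I would then change variables $u=\psi(\theta)$, using the inverse-function identity $\psi'(\theta)=1/\phi_\sigma'(u)$ (the same identity exploited in Proposition~\ref{DualGProp}) together with $\mathrm{d}\theta=\phi_\sigma'(u)\,\mathrm{d}u$. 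Term by term, the three integrands transform precisely into the three integrands of $\mathcal{E}_S^*[\sigma,f]$, which establishes the identity when $L(\sigma)=2\pi$.

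Finally, the reduction to $L(\sigma)=2\pi$, and with it the length prefactor in the general statement, is justified by the scaling behaviour of the two functionals exactly as in Proposition~\ref{DualGProp} (recall that the duality preserves $L(\sigma)$). The one place where care is needed — and hence the main obstacle — is the change of variables itself: one must apply $\psi'=1/(\phi_\sigma'\circ\psi)$ consistently so that each reciprocal and Jacobian factor lands in the right place, since this is precisely where a stray factor of $\phi_\sigma'$ could be introduced and corrupt the matching of the three terms.
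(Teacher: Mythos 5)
Your proposal is correct and follows essentially the same route as the paper's own proof: reduce to $L(\sigma)=2\pi$, express $\mathcal{E}_S[(\sigma,f)^*]$ in $\theta$-coordinates with $g=f\circ\psi$, $\psi=\phi_\sigma^{-1}$, and apply the single change of variables $u=\psi(\theta)$ together with $\psi'=1/(\phi_\sigma'\circ\psi)$ to land term by term on the integrand of $\mathcal{E}_S^*[\sigma,f]$. The paper's computation is exactly this substitution (written with $\psi_\sigma'\circ\psi_\sigma^{-1}=1/\phi_\sigma'$), and its treatment of the scaling reduction is the same one-line remark as yours.
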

\begin{proof}
By scaling, we may assume that $L(\sigma)=2\pi$.
Writing $\psi_\sigma=\phi_\sigma^{-1}$, we compute
\begin{align*}
 \mathcal{E}_S[(\sigma, f)^*] &=\int_{\mathbb{S}^1} ((f\circ \psi_\sigma)')^2 +(\psi_\sigma')^2 (f\circ \psi_\sigma)^2-(f\circ 
\psi_\sigma)^2 \; \mathrm{d}\theta \\
 &=\int_{\mathbb{S}^1} (\psi_\sigma')^2 (f'\circ \psi_\sigma)^2 +(\psi_\sigma')^2 (f\circ \psi_\sigma)^2-\frac{(f\circ 
\psi_\sigma)^2}{\psi_\sigma'} \psi_\sigma' \; \mathrm{d}\theta\\
&=\int_{\mathbb{S}^1} (\psi_\sigma'\circ \psi_\sigma^{-1}) (f')^2 +(\psi_\sigma'\circ \psi_\sigma^{-1}) f^2-\frac{f^2}{\psi_\sigma'\circ \psi_\sigma^{-1}} \; \mathrm{d}\theta\\
&=\int_{\mathbb{S}^1} \frac{(f')^2}{\phi_\sigma'} + \frac{f^2}{\phi_\sigma'}-\phi_\sigma' f^2 \; \mathrm{d}\theta \\
=\mathcal{E}_S^*[\sigma,f].
\end{align*}

\end{proof}

\section{Deriving the geometric estimates}
To prove Theorem \ref{MainThm} we will use the direct method in the calculus of variations on an appropriate subclass of the class of 
degree-one convex curves.  This subclass is larger than the class of closed curves. We first note that the conjecture of 
Benguria and Loss holds for symmetric curves.
\begin{prop} \label{SymCurvesOvals}
 For $\sigma\in \mathcal{D}^2$, if the induced diffeomorphism satisfies $\phi_\sigma \circ I=I \circ \phi_\sigma$, 
then $\mathcal{E}_S[\sigma,f]\geq 0 $
with equality if and only if $\sigma\in \mathcal{O}$ and $f=\kappa_\sigma^{-1/2}$ is the lowest eigenfunction of $L_\sigma$.
\end{prop}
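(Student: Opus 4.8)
The plan is to prove nonnegativity of the quadratic form $\mathcal{E}_S[\sigma,\cdot]$ by passing to the bottom of the spectrum and using the half-period symmetry to decouple the problem. Since the sign of $\mathcal{E}_S[\sigma,f]$ is unchanged under rescaling $\sigma$, I would first normalize $L(\sigma)=2\pi$, so that $\mathrm{d}s=\mathrm{d}\theta$ and
\[
\mathcal{E}_S[\sigma,f]=\int_{\mathbb{S}^1}(f')^2+(\kappa_\sigma^2-1)f^2\,\mathrm{d}\theta .
\]
The hypothesis $\phi_\sigma\circ I=I\circ\phi_\sigma$ says exactly that $\kappa_\sigma(\theta+\pi)=\kappa_\sigma(\theta)$, i.e.\ the potential $\kappa_\sigma^2$ is invariant under the half-rotation $\theta\mapsto\theta+\pi$. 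Nonnegativity of the form is equivalent to $\lambda_\sigma\ge 1$, where $\lambda_\sigma$ is the lowest eigenvalue of $L_\sigma=-\partial_\theta^2+\kappa_\sigma^2$; as the ground state $f_0>0$ is simple, the invariance of $\kappa_\sigma^2$ forces $f_0(\theta+\pi)=f_0(\theta)$. This $\pi$-periodicity is the structural gain from the symmetry.

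Next I would diagonalize $\mathcal{E}_S[\sigma,\cdot]$ under the half-rotation. Writing $f=f_++f_-$ with $f_\pm(\theta+\pi)=\pm f_\pm(\theta)$, the $\pi$-periodicity of $\kappa_\sigma^2$ renders every cross term $\pi$-antiperiodic and hence of vanishing integral, so $\mathcal{E}_S[\sigma,f]=\mathcal{E}_S[\sigma,f_+]+\mathcal{E}_S[\sigma,f_-]$ and the two summands may be treated separately. The antiperiodic summand is immediate: an antiperiodic function has Fourier support on the odd frequencies, so Wirtinger's inequality gives $\int(f_-')^2\,\mathrm{d}\theta\ge\int f_-^2\,\mathrm{d}\theta$, whence $\mathcal{E}_S[\sigma,f_-]\ge\int\kappa_\sigma^2 f_-^2\,\mathrm{d}\theta\ge 0$, with equality forcing $f_-\equiv 0$ since $\kappa_\sigma>0$. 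The periodic summand descends to the circle $\Real/\pi\mathbb{Z}$ of half the circumference, where Wirtinger's inequality is vacuous and one must genuinely exploit the potential.

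The crux — and the step I expect to be the main obstacle — is therefore the following assertion on $\Real/\pi\mathbb{Z}$: if $\kappa>0$ and $\int_0^\pi\kappa\,\mathrm{d}\theta=\pi$, then the lowest eigenvalue $\mu_0$ of $-\partial_\theta^2+\kappa^2$ is at least $1$. For constant test functions this is the Cauchy--Schwarz bound $\int_0^\pi\kappa^2\,\mathrm{d}\theta\ge\pi^{-1}\bigl(\int_0^\pi\kappa\,\mathrm{d}\theta\bigr)^2=\pi$, so the content lies in the general function. Writing $\kappa=1+u$ with $\int_0^\pi u=0$ and $a=-g_0'/g_0$ for the (positive, $\pi$-periodic) ground state $g_0$, the Riccati identity $a'=a^2-\kappa^2+\mu_0$ integrates to $\mu_0=1+\pi^{-1}\bigl(\int u^2-\int a^2\bigr)$, so the assertion is equivalent to $\int_0^\pi a^2\,\mathrm{d}\theta\le\int_0^\pi u^2\,\mathrm{d}\theta$. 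To leading order $a=-2\!\int^\theta\! u+\text{const}$ (with the constant fixed by $\int_0^\pi a=0$), and since the lowest nonzero frequency on $\Real/\pi\mathbb{Z}$ is $2$, Wirtinger's inequality yields $\int a^2\le\int u^2$ with equality only for second-harmonic $u$ — precisely the infinitesimal M\"obius deformations of the round circle, i.e.\ the directions tangent to $\mathcal{O}$. Turning this second-variation computation into a nonperturbative estimate is the heart of the argument; I would attempt it either through the Riccati reformulation above or by changing to the tangent-angle variable $\beta=\phi_\sigma(\theta)$, which recasts the inequality as the weighted Poincar\'e inequality $\int_0^\pi\rho^{-1}\bigl((\partial_\beta g)^2+g^2\bigr)\,\mathrm{d}\beta\ge\int_0^\pi\rho\,g^2\,\mathrm{d}\beta$ with $\rho=\kappa^{-1}$ and $\int_0^\pi\rho\,\mathrm{d}\beta=\pi$.

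Finally, to extract the equality statement I would reassemble the two blocks. Vanishing of $\mathcal{E}_S[\sigma,f]$ forces $f_-\equiv 0$ and forces $f_+$ to realize equality in the half-circle assertion, hence $\lambda_\sigma=1$ with $f_+$ the ground state; the equality analysis above then forces $\kappa_\sigma$ to satisfy the oval equation $S_\theta(\phi_\sigma)+2\kappa_\sigma^2-2=0$, so that $\phi_\sigma\in\Mob(\mathbb{S}^1)$ and $\sigma\in\mathcal{O}$ by \eqref{SchwarzianMobS1eqn}. For such $\sigma$, identity \eqref{OneIneqSchwarzian} shows $\mathcal{E}_S[\sigma,\kappa_\sigma^{-1/2}]=\mathcal{E}_G[\sigma]=0$, so the ground state is $f=\kappa_\sigma^{-1/2}$, as claimed.
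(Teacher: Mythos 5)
Your reduction is sound as far as it goes: the normalization $L(\sigma)=2\pi$, the observation that simplicity of the ground state forces $f_0\circ I=f_0$, the splitting $f=f_++f_-$ with vanishing cross terms, and the disposal of the antiperiodic block by Wirtinger's inequality are all correct. But you then reduce the proposition to the assertion that on $\Real/\pi\mathbb{Z}$ the operator $-\partial_\theta^2+\kappa^2$ with $\kappa>0$ and $\int_0^\pi\kappa\,\mathrm{d}\theta=\pi$ has lowest eigenvalue at least $1$, and this assertion --- which you yourself call ``the heart of the argument'' --- is never proved. The Riccati identity $\mu_0=1+\pi^{-1}\bigl(\int u^2-\int a^2\bigr)$ is correct, but the needed inequality $\int_0^\pi a^2\,\mathrm{d}\theta\le\int_0^\pi u^2\,\mathrm{d}\theta$ is only verified to second order around $\kappa\equiv 1$: the relation $a=-2\int^\theta u+\mathrm{const}$ is a linearization, not an identity, and nothing in your proposal controls the error nonperturbatively. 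Since the half-circle assertion is essentially equivalent to the symmetric Benguria--Loss inequality, i.e.\ to the proposition itself, what is left unproven is the entire content of the statement; the equality analysis, which rests on it, is likewise incomplete.

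The missing idea is the substitution of Benguria and Loss that the paper uses: set $\yY=f\,\TT_\sigma$. Since $\TT_\sigma'=\kappa_\sigma\NN_\sigma$ after the normalization, one has pointwise $|\yY'|^2=(f')^2+\kappa_\sigma^2f^2$ and $|\yY|^2=f^2$, so
$\mathcal{E}_S[\sigma,f]=\int_{\mathbb{S}^1}|\yY'|^2-|\yY|^2\;\mathrm{d}\theta$,
with no potential left to fight. The symmetry now enters through the tangent map rather than the curvature: $\phi_\sigma\circ I=I\circ\phi_\sigma$ gives $\TT_\sigma\circ I=-\TT_\sigma$, and since the ground state satisfies $f\circ I=f$, the vector field $\yY$ is odd, $\yY\circ I=-\yY$, hence $\int_{\mathbb{S}^1}\yY\;\mathrm{d}\theta=0$. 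The vector-valued Poincar\'e (Wirtinger) inequality then gives $\mathcal{E}_S[\sigma,f]\ge 0$ at the ground state, hence for all $f$, with equality exactly when $\yY$ is a first harmonic --- which is precisely the condition that $\sigma\in\mathcal{O}$ and $f=|\yY|=\kappa_\sigma^{-1/2}$. In short, the curvature term should be absorbed into a derivative \emph{before} applying Wirtinger, rather than carried as a potential into a half-period eigenvalue problem that your argument cannot close.
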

\begin{proof}
By scaling we may assume $L(\sigma)=2\pi$.
The symmetry implies that $\kappa_\sigma\circ I=\kappa_\sigma$ and $\TT_\sigma\circ I=-\TT_\sigma$. Hence, $\mathcal{E}_S[\sigma, 
f]=\mathcal{E}_S[\sigma, f\circ I]$ and so, the variational characterization of the lowest eigenvalue implies that 
the lowest eigenfunction $f$ must satisfy $f\circ I=f$.  
As observed in \cite{MR2091490}, 
\begin{equation*}
 \mathcal{E}_S[\sigma,f]= \int_{\mathbb{S}^1} |\yY'|^2-|\yY|^2 \; \mathrm{d}\theta
\end{equation*}
where $\yY=f \TT_\sigma$.  Moreover, $\yY(p)=(a \cos \theta(p)+b \sin \theta(p),c \cos \theta(p)+d \sin \theta(p) )$ if and only 
if $\sigma\in \mathcal{O}$.  As $\yY\circ I=-\yY$,
\begin{equation*}
 \int_{\mathbb{S}^1} \yY \; \mathrm{d}\theta=0
\end{equation*}
and the proposition follows from the one-dimensional Poincar\'{e} inequality.
\end{proof}
\begin{cor}
 \label{SymCurvesOne}
 For $\sigma\in \mathcal{D}^3_+$, if the induced diffeomorphism satisfies $\phi_\sigma\circ I=I \circ 
\phi_\sigma$, 
then $\mathcal{E}_G[\sigma]\geq 0$
with equality if and only if $\sigma\in \mathcal{O}$.
\end{cor}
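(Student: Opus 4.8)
The plan is to deduce the corollary directly from Proposition \ref{SymCurvesOvals} via the identity $\mathcal{E}_G[\sigma]=\mathcal{E}_S[\sigma,\kappa_\sigma^{-1/2}]$ recorded in the introduction. The idea is that the distinguished test function $f=\kappa_\sigma^{-1/2}$ realizes $\mathcal{E}_G$ as a single value of the spectral functional $\mathcal{E}_S$, and the symmetry hypothesis $\phi_\sigma\circ I=I\circ\phi_\sigma$ is exactly the one under which $\mathcal{E}_S$ has already been shown to be nonnegative. So nothing new has to be estimated; the whole content is a substitution.

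First I would check that $f=\kappa_\sigma^{-1/2}$ is a legitimate competitor and that the identity holds at the stated regularity. Since $\sigma\in\mathcal{D}^3_+$, the curvature $\kappa_\sigma$ is positive and of class $C^1$, so $\kappa_\sigma^{-1/2}$ is a positive $C^1$ function and in particular lies in $H^1(\mathbb{S}^1)$. I would also confirm that the identity $\mathcal{E}_G[\sigma]=\mathcal{E}_S[\sigma,\kappa_\sigma^{-1/2}]$ needs no more than this: expanding $|\nabla_\sigma\kappa_\sigma^{-1/2}|^2=\tfrac14\kappa_\sigma^{-3}|\nabla_\sigma\kappa_\sigma|^2$ and using the total-curvature relation $\int_\sigma\kappa_\sigma\,\mathrm{d}s=2\pi$ converts \eqref{MainIneq}, evaluated at $\kappa_\sigma^{-1/2}$, into \eqref{OneIneq}, with only one derivative of $\kappa_\sigma$ ever appearing.

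With this in hand the inequality is immediate: the hypothesis $\phi_\sigma\circ I=I\circ\phi_\sigma$ places $\sigma$ (which is in $\mathcal{D}^2$) in the scope of Proposition \ref{SymCurvesOvals}, so $\mathcal{E}_S[\sigma,\kappa_\sigma^{-1/2}]\ge 0$, hence $\mathcal{E}_G[\sigma]\ge 0$. For the equality assertion I would read off both directions from the biconditional in that proposition, applied to the single competitor $f=\kappa_\sigma^{-1/2}$: if $\mathcal{E}_G[\sigma]=0$ then $\mathcal{E}_S[\sigma,\kappa_\sigma^{-1/2}]=0$, which by Proposition \ref{SymCurvesOvals} forces $\sigma\in\mathcal{O}$; conversely, if $\sigma\in\mathcal{O}$ then that same proposition asserts that $\kappa_\sigma^{-1/2}$ is precisely the lowest eigenfunction of $L_\sigma$ and that $\mathcal{E}_S[\sigma,\kappa_\sigma^{-1/2}]=0$, whence $\mathcal{E}_G[\sigma]=0$.

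Since the substance is entirely contained in Proposition \ref{SymCurvesOvals}, there is no genuine analytic obstacle here; the only points demanding care are bookkeeping. I would make sure the differentiability is tracked correctly, namely that $C^3$ for $\sigma$ (so $C^1$ for $\kappa_\sigma$) suffices both to form $\kappa_\sigma^{-1/2}$ and to justify the integration by parts implicit in the identity, and that the equality analysis is applied to the one distinguished test function rather than to an arbitrary $f$, so that no independent proof that $\kappa_\sigma^{-1/2}$ minimizes is required beyond what the proposition already supplies.
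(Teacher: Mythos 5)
Your proposal is correct and is exactly the paper's proof, which reads in its entirety: take $f=\kappa_\sigma^{-1/2}$ in \eqref{MainIneq} and apply Proposition \ref{SymCurvesOvals}; your expansion of the identity $\mathcal{E}_G[\sigma]=\mathcal{E}_S[\sigma,\kappa_\sigma^{-1/2}]$ and the two readings of the equality biconditional are just the bookkeeping the paper leaves implicit. The only slight overstatement is that no integration by parts is hidden in that identity -- it follows pointwise from $|\nabla_\sigma \kappa_\sigma^{-1/2}|^2=\tfrac{1}{4}\kappa_\sigma^{-3}|\nabla_\sigma\kappa_\sigma|^2$ together with the total curvature relation $\int_\sigma \kappa_\sigma\,\mathrm{d}s=2\pi$, so the $C^1$ regularity of $\kappa_\sigma$ is more than enough.
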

\begin{proof}
 Take $f=\kappa_\sigma^{-1/2}$ in \eqref{MainIneq} and use Proposition \ref{SymCurvesOvals}.
\end{proof}
Motivated by \cite{Linde}, we make the 
following definition which is a weakening of the preceding symmetry condition.  
\begin{defn}
 A point $p\in \mathbb{S}^1$ is a \emph{balance point} of $\phi\in \Diff_+^0$ if $\phi(I(p))=I(\phi(p))$. Denote the number 
(possibly infinite) of balance points of 
$\phi$ by $n_B(\phi)\in \mathbb{N}\cup \set{\infty}$.  
\end{defn}
Clearly, if $p$ is a balance point then so is $I(p)$ and so $n_B(\phi)$ is even or $\infty$.  Further, it follows from the 
intermediate value theorem that $n_B(\phi)\geq 2$.  

Our definition of balance point is a slight generalization of Linde's \cite{Linde} 
notion of \emph{critical point} for convex closed curves.  Indeed, a critical point of a closed convex curve is just a balance point of its 
induced diffeomorphism. The key observation of Linde \cite[Lemma 2.1]{Linde} is that closed convex curves have 
at least six critical points. We will only need to know that there are at least four critical points and, so, for the sake of completeness, 
include an adaptation of Linde's argument to show this.
\begin{lem} \label{LindeLem}
If $\psi\in \Diff_+^1(\mathbb{S}^1)$ satisfies
\begin{equation*}
 \int_{\mathbb{S}^1}\psi' \cos \theta  \; \mathrm{d}\theta = \int_{\mathbb{S}^1}  \psi' \sin \theta \;\mathrm{d}\theta=0,
\end{equation*}
then $n_B(\psi)\geq 4$.  Hence, if $\sigma\in \mathcal{D}^2_+$ is closed, then $n_B(\phi_\sigma)\geq 4$.
\end{lem}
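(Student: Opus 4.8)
The plan is to translate the condition ``$p$ is a balance point of $\psi$'' into the vanishing of an explicit scalar function on $\mathbb{S}^1$ and then to count its zeros. Lifting $\psi$ to $\tilde\psi\colon\mathbb{R}\to\mathbb{R}$ with $\tilde\psi(x+2\pi)=\tilde\psi(x)+2\pi$, I would set
\[
k(\theta)=\tilde\psi(\theta+\pi)-\tilde\psi(\theta)-\pi .
\]
A point $\theta$ is a balance point precisely when $\psi(\theta+\pi)=\psi(\theta)+\pi \bmod 2\pi$, i.e.\ $k(\theta)\in 2\pi\mathbb{Z}$; since $\psi\in\Diff_+^1(\mathbb{S}^1)$ forces $0<\tilde\psi(\theta+\pi)-\tilde\psi(\theta)<2\pi$, we have $k\in(-\pi,\pi)$, so the balance points are exactly the zeros of $k$ in $[0,2\pi)$. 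A direct computation using $\tilde\psi(x+2\pi)=\tilde\psi(x)+2\pi$ gives $k(\theta+\pi)=-k(\theta)$, so the zeros occur in antipodal pairs and, when finite, $n_B(\psi)$ is even; if $k\equiv0$ then every point is a balance point and $n_B(\psi)=\infty$, while otherwise $n_B(\psi)\geq2$ by the intermediate value theorem. The whole game is to upgrade this to $n_B(\psi)\geq4$ by excluding the configuration of exactly two zeros.

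The key observation is that the hypothesis removes precisely the first Fourier harmonic. Writing $u=\psi'-1$, the normalization $\int_{\mathbb{S}^1}\psi'\,\mathrm{d}\theta=2\pi$ gives $\int_{\mathbb{S}^1}u\,\mathrm{d}\theta=0$, and the hypothesis together with $\int_{\mathbb{S}^1}\cos\theta\,\mathrm{d}\theta=\int_{\mathbb{S}^1}\sin\theta\,\mathrm{d}\theta=0$ yields
\[
\int_{\mathbb{S}^1}u\cos\theta\,\mathrm{d}\theta=\int_{\mathbb{S}^1}u\sin\theta\,\mathrm{d}\theta=0,
\]
hence $\int_{\mathbb{S}^1}u(s)\sin(s-c)\,\mathrm{d}s=0$ for every constant $c$. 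Since $k(\theta)=\int_\theta^{\theta+\pi}u(s)\,\mathrm{d}s$, I would substitute $s=\theta+t$, apply Fubini to interchange the $t$- and $\theta$-integrations, and use the previous identity on the inner full-period integral to obtain the central fact:
\[
\int_{\mathbb{S}^1}k(\theta)\sin(\theta-\theta_0)\,\mathrm{d}\theta=0 \qquad\text{for all }\theta_0 .
\]

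With this in hand, I would exclude the minimal case $n_B(\psi)=2$ by contradiction. If $k$ has exactly the two (necessarily antipodal) zeros $\theta_0$ and $\theta_0+\pi$, then $k$ has constant sign on each of the open half-circles $(\theta_0,\theta_0+\pi)$ and $(\theta_0+\pi,\theta_0+2\pi)$, and these signs are opposite because $k(\theta+\pi)=-k(\theta)$. The function $\sin(\theta-\theta_0)$ has exactly the same sign pattern, so the product $k(\theta)\sin(\theta-\theta_0)$ does not change sign and is not identically zero; its integral is therefore nonzero, contradicting the central fact. Since a finite $n_B(\psi)$ is even and can equal neither $0$ nor $2$, it is at least $4$ (and the case $n_B(\psi)=\infty$ is trivial).

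Finally, for the ``Hence'' statement I would apply the first part to $\psi=\phi_\sigma^{-1}$. The balance condition at $p$ for $\phi_\sigma$ is equivalent to the balance condition at $\phi_\sigma(p)$ for $\phi_\sigma^{-1}$, so $\phi_\sigma$ induces a bijection of the two balance-point sets and $n_B(\phi_\sigma)=n_B(\phi_\sigma^{-1})$. Closedness $\sigma(0)=\sigma(2\pi)$ is equivalent to $\int_0^{2\pi}\TT_\sigma\,\mathrm{d}\theta=0$, i.e.\ (using $\TT_\sigma=\TT_0\circ\phi_\sigma$) to $\int_{\mathbb{S}^1}\cos\phi_\sigma\,\mathrm{d}\theta=\int_{\mathbb{S}^1}\sin\phi_\sigma\,\mathrm{d}\theta=0$; the change of variables $u=\phi_\sigma(\theta)$, $\mathrm{d}u=\phi_\sigma'\,\mathrm{d}\theta$, turns these into exactly the hypotheses $\int_{\mathbb{S}^1}\psi'\cos\theta\,\mathrm{d}\theta=\int_{\mathbb{S}^1}\psi'\sin\theta\,\mathrm{d}\theta=0$ for $\psi=\phi_\sigma^{-1}$, so the claim follows. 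The one genuinely clever step — and the main obstacle — is recognizing that the hypothesis is precisely the orthogonality of $k$ to the first harmonic, which is what rules out the two-zero configuration; the remainder is bookkeeping with the antipodal symmetry $k(\theta+\pi)=-k(\theta)$ and a routine change of variables.
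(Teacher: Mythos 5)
Your proof is correct and is essentially the paper's proof in lightly different clothing: your defect function $k(\theta)=\tilde{\psi}(\theta+\pi)-\tilde{\psi}(\theta)-\pi$ equals $-2f(\theta)$, where $f$ is the odd-harmonic part in the paper's Fourier decomposition $\psi=C+\theta+f+g$, and both arguments conclude by playing the orthogonality of this antipodally antisymmetric function to the first harmonic against a constant-sign analysis on the two half-circles (plus the same duality trick $\psi=\phi_\sigma^{-1}$ for the closedness statement). The only real difference is presentational: you obtain $\int_{\mathbb{S}^1}k(\theta)\sin(\theta-\theta_0)\,\mathrm{d}\theta=0$ directly via Fubini from $k(\theta)=\int_\theta^{\theta+\pi}u$, whereas the paper reads the same orthogonality off the Fourier expansion of $\psi'$.
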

\begin{proof}
 As $\int_{\mathbb{S}^1} \psi' \; \mathrm{d}\theta=2\pi$ and $\psi'$ is continuous, there is a point $p_0$, so that if $\gamma_\pm$ are the 
components 
of $\mathbb{S}^1\backslash \set{p_0, I(p_0)}$, then $\int_{\gamma_\pm} \psi' \;\mathrm{d}\theta=\pi$.  That is, $p_0$ and $I(p_0)$ are balance points.  
Expanding $\psi'$ as a Fourier series, rotating so $\theta(p_0)=0$ and abusing notation slightly, gives that 
$$\psi=C+\theta+\sum_{n=2}^\infty \left(  a_n \sin n\theta + b_n \cos 
n\theta \right)=C+\theta+f(\theta)+g(\theta)$$ 
where $C$ is a constant,  $f$ are the remaining odd terms in the expansion and $g$ are the remaining even terms.
 By construction, $\psi(0)+\pi=\psi(\pi)$, $f(\theta+\pi)=-f(\theta)$ and $g(\theta+\pi)=g(\theta)$ and so $f(0)=0=f(\pi)$ and all balance points of $\psi$ in $\gamma_+$ correspond 
to zeros of $f$ in $(0,\pi)$.  If $f$ does not change sign on $(0,\pi)$, then either $f\equiv 0$ and $\psi$ has an infinite number of balance 
points, or $\int_{0}^\pi f(\theta) \sin \theta \; \mathrm{d}\theta \neq 0$.  However, as $f(\theta+\pi)\sin (\theta+\pi)=f(\theta) \sin 
\theta$, this would imply $\int_{0}^{2\pi} f(\theta) \sin \theta \; \mathrm{d}\theta \neq 0$ which is impossible.  Hence, $f$ must change sign and 
so 
$f$ has at least one zero in $(0,\pi)$ which verifies the first claim.

To verify the second claim. We first scale so $L(\sigma)=2\pi$.
 If $\sigma$ is closed, then $\int_{\mathbb{S}^1} \TT_\sigma \; \mathrm{d}\theta=0$. That is, $\int_{\mathbb{S}^1} \TT_0 \circ \phi_\sigma 
\; 
\mathrm{d}\theta=0$.  Changing variables, gives $\int_{\mathbb{S}^1} (\phi_\sigma^{-1})' \TT_0 \; \mathrm{d}\theta$.  Hence, 
$n_B(\phi_\sigma^{-1})\geq 4$ and it is clear that $n_B(\phi_\sigma)\geq 4$ as well.
\end{proof}

The spaces on which the functionals \eqref{OneIneq} and \eqref{TwoIneq} have good lower bounds seem to be 
spaces of curves whose induced diffeomorphisms have non-trivial number of balance points.  Motivated by this, set
\begin{equation*}
 \BDiff_+^{k, \alpha}(\mathbb{S}^1, N)=\set{\phi \in \Diff_+^{k, \alpha}(\mathbb{S}^1): n_B(\phi)\geq N}.
\end{equation*}
Hence, $\BDiff_+^{k, \alpha}(\mathbb{S}^1, 2)=\Diff_+^{k, \alpha}(\mathbb{S}^1)$ and $\Mob(\mathbb{S}^1)\subset 
\BDiff_+^{\infty}(\mathbb{S}^1, 
N)$ for all $N$.
Let $\overline{\BDiff}_+^{k,\alpha}(\mathbb{S}^1, N)$ be the closure of $\BDiff_+^{k, \alpha}(\mathbb{S}^1, N)$ in $\Diff_+^{k, 
\alpha}(\mathbb{S}^1)$, $\mathring{\BDiff}_+^{k,\alpha}(\mathbb{S}^1,N)$ be the interior and $\partial{\BDiff}_+^{k,\alpha}(\mathbb{S}^1, 
N)$ be the topological boundary. 
The function $n_B$ is not continuous on these spaces.  For example, the family $\phi_\lambda\in \Diff_+^\infty(\mathbb{S}^1)$ given by
\begin{equation} \label{BadFamily}
 \theta(\phi_\lambda (p))=2\cot^{-1} \left(\lambda \cot\left( \frac{1}{2} \theta(p)\right) \right),
\end{equation}
 for $\lambda>0$ has $n_B(\phi_\lambda)=2$ for $\lambda\neq 1$ and $n_B(\phi_1)=\infty$ and $\phi_\lambda\to \phi_1$ in 
$\Diff_+^\infty(\mathbb{S}^1)$ as $\lambda\to 1$.  Likewise, the family $\psi_\tau\in \Diff_+^{1,1}(\mathbb{S}^1)$, for $\tau\in \Real$ given 
by 
\begin{equation} \label{BadFamily2}
 \theta(\psi_\tau (p))=\left\{\begin{array}{cc} \cot^{-1}\left( \tau+\cot(\theta(p)-\frac{\pi}{2})\right)+\frac{\pi}{2} & 
\theta(p)\in\left[\frac{\pi}{2},\frac{3\pi}{2}\right] \\
                                                 \theta(p) & \theta(p)\in\left[0,\frac{\pi}{2}\right)\cup \left(\frac{3\pi}{2},2\pi\right),
                              \end{array} \right.
\end{equation}
 has $n_B(\psi_\tau)=2$ for $\tau\neq 0$ and $n_B(\psi_0)=\infty$.  Moreover, setting
\begin{equation} \label{BadFamily3}
 \psi_\tau^\lambda= \phi_{\lambda}^{-1} \circ \psi_\tau \circ \phi_{\lambda}\in \Diff_+^{1,1}(\mathbb{S}^1)
\end{equation}
gives a family so that for $\lambda>1$, $n_B(\psi_{\tau}^\lambda)=\infty$ and $\psi_\tau^\lambda\to \psi_\tau$ in $ 
\Diff_+^{1,\alpha}(\mathbb{S}^1)$ as $\lambda\to 1$ for any $\alpha\in[0,1)$. Observe that $\psi_\tau$ is the extension by the identity of the restriction of an element 
of $\Mob(\mathbb{S}^1)$ to one component of $\mathbb{S}^1\backslash \set{\eE_2,-\eE_2}$ and there 
are no other elements of $\Mob(\mathbb{S}^1)$ for which such an extension exists as an element of $\Diff_+^1(\mathbb{S}^1)$.

The elements of \eqref{BadFamily} show that $\Mob(\mathbb{S}^1)\subset \partial \BDiff_+^1(\mathbb{S}^1,4)$, while the 
elements of \eqref{BadFamily2} show that $\partial \BDiff_+^1(\mathbb{S}^1,4)$ contains elements with $n_B=2$.
In order to proceed further, we must refine the notion of balance point.
If $\phi\in \Diff_+^1(\mathbb{S}^1)$, then a balance point $p$ of $\phi$ is  \emph{stable} if and only if $\phi'(p)\neq \phi'(I(p))$ and is 
\emph{unstable} if $\phi'(p)=\phi'(I(p))$.  Denote the number of stable balance points 
of $\phi$  by $n_{SB}(\phi)$. For instance, the $\psi_\tau$ of \eqref{BadFamily2} have $n_{SB}(\psi_\tau)=0$.
\begin{lem}\label{ContBalLem}
 If $\phi\in 
\Diff_+^1(\mathbb{S}^1)$, then for each $N\in \mathbb{N}$ there is a $C^1$ 
neighborhood, $V=V_N$,  of $\phi$ so that $\min\set{n_{SB}(\phi),N}\leq 
n_{SB}(\psi)$ for all 
$\psi \in V$.  Furthermore, if $\phi$ satisfies 
$n_B(\phi)=n_{SB}(\phi)<\infty$, then 
$n_B$ is constant in a $C^1$ neighborhood of $\phi$.
\end{lem}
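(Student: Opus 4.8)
The plan is to translate both assertions into elementary statements about the zeros of a single $C^1$ function attached to $\phi$, depending continuously on $\phi$. Lift $\phi$ to a strictly increasing map $\hat\phi:\Real\to\Real$ with $\hat\phi(x+2\pi)=\hat\phi(x)+2\pi$ and $\pi\circ\hat\phi=\phi\circ\pi$, and define $h_\phi\in C^1(\mathbb{S}^1)$ by $h_\phi(\pi(x))=\hat\phi(x+\pi)-\hat\phi(x)-\pi$. This descends to $\mathbb{S}^1$ (the right-hand side is unchanged under $x\mapsto x+2\pi$), and since $\hat\phi$ is increasing with $\hat\phi(x+2\pi)=\hat\phi(x)+2\pi$ it takes values in $(-\pi,\pi)$. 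Reading Definition of balance point in the angle coordinate, $p=\pi(x)$ is a balance point of $\phi$ exactly when $\hat\phi(x+\pi)\equiv\hat\phi(x)+\pi \pmod{2\pi}$, which, because $h_\phi\in(-\pi,\pi)$, is equivalent to $h_\phi(p)=0$. Differentiating, $p$ is \emph{stable} precisely when $\hat\phi'(x+\pi)\neq\hat\phi'(x)$, i.e. when $h_\phi'(p)\neq 0$. Hence $n_B(\phi)$ is the number of zeros of $h_\phi$ and $n_{SB}(\phi)$ is the number of its simple zeros, and the assignment $\phi\mapsto(h_\phi,h_\phi')$ is continuous from $\Diff_+^1(\mathbb{S}^1)$ with its $C^1$ topology into $C^0(\mathbb{S}^1)\times C^0(\mathbb{S}^1)$.

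For the first claim I would set $m=\min\set{n_{SB}(\phi),N}$ and select $m$ distinct simple zeros $p_1,\dots,p_m$ of $h_\phi$. Each is isolated and has $h_\phi'(p_i)\neq 0$, so I can choose pairwise disjoint closed arcs $J_i$ about $p_i$ on which $h_\phi'$ is bounded away from zero with a fixed sign; then $h_\phi$ is strictly monotone on $J_i$ and takes opposite signs at the two endpoints. By the continuity noted above there is a $C^1$ neighborhood $V$ of $\phi$ such that for every $\psi\in V$ and every $i$, $h_\psi'$ still has constant sign on $J_i$ and $h_\psi$ still takes opposite signs at the endpoints of $J_i$. Then $h_\psi$ is strictly monotone on $J_i$ with a single zero there, necessarily simple. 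As the $J_i$ are disjoint and finite in number, $V$ is uniform and $n_{SB}(\psi)\ge m$, which is the first assertion; note this argument is indifferent to whether $n_{SB}(\phi)$ is finite or infinite.

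For the second claim, assume $n_B(\phi)=n_{SB}(\phi)<\infty$, so every zero of $h_\phi$ is simple; there are then finitely many, say $2k$, and they are isolated. Applying the construction above with $N=2k$ gives disjoint arcs $J_1,\dots,J_{2k}$, one about each zero, on which every nearby $h_\psi$ has exactly one (simple) zero. On the compact complement $K=\mathbb{S}^1\setminus\bigcup_i\operatorname{int}J_i$ we have $h_\phi\neq 0$, hence $\abs{h_\phi}\ge c>0$ on $K$; shrinking $V$ so that $\norm{h_\psi-h_\phi}_{C^0}<c$ forces $h_\psi\neq 0$ on $K$ for all $\psi\in V$. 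Thus every zero of such an $h_\psi$ lies in some $J_i$, and each $J_i$ carries exactly one, so $h_\psi$ has exactly $2k$ zeros, all simple. Therefore $n_B(\psi)=n_{SB}(\psi)=2k=n_B(\phi)$ throughout $V$, giving the claimed constancy of $n_B$.

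The only genuinely delicate step is the reformulation: checking that $h_\phi$ is well defined on $\mathbb{S}^1$, that its range in $(-\pi,\pi)$ collapses the congruence $h_\phi\equiv 0\pmod{2\pi}$ to the equation $h_\phi=0$, and that the stable/unstable dichotomy for balance points is exactly the simple/degenerate dichotomy for zeros. Once that dictionary is set up, both parts reduce to the standard stability of transverse zeros of a $C^1$ function under $C^0$-small perturbations of the function together with its derivative, which is routine; and the truncation $\min\set{n_{SB}(\phi),N}$ in the statement is precisely what lets the first part dispense with any finiteness hypothesis.
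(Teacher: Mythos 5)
Your proof is correct and follows essentially the same route as the paper's: isolate each stable balance point in a small arc on which a transversality condition forces every $C^1$-nearby diffeomorphism to have exactly one stable (and no unstable) balance point, intersect finitely many such neighborhoods, and, for the second claim, use compactness of the complement to rule out any further balance points. Your auxiliary function $h_\phi$ simply makes explicit the inverse-function-theorem/simple-zero mechanism that the paper invokes implicitly (its ``it is straightforward to show'' step), which is a tidy clarification rather than a different argument.
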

\begin{proof}
Let $B$ be the set of balance points of $\phi$ and $S\subset B$ be the set of 
stable balance points.   It follows from the inverse 
function theorem that for each $p\in S$, there is an open interval, $I_p$, in $\mathbb{S}^1$ so that $B\cap I_p=\set{p}$.
 It is straightforward to show, after fixing smaller open intervals, 
$I_p'$, 
satisfying $p\subset 
I_p'$ and $\bar{I}_p'\subset I_p$, that there are  $C^1$ 
neighborhoods, $V_p$, of $\phi$ in $\Diff_+^1(\mathbb{S}^1)$ so that all 
$\psi \in V_p$ have  only one stable balance point in 
$I_p' $ and no unstable balance points.  

If $n_{SB}(\phi)> N$, then let $S_N\subset S$ be 
some choice of $N$ distinct points of $S$, otherwise, let $S_N=S$.
As $S_N$ is finite, $V_N=\cap_{p\in S_N} 
V_p$ is a an open $C^1$ neighborhood of 
$\phi$ in $\Diff_+^1(\mathbb{S}^1)$ so that for any $\psi\in V_N$, there are 
$\min\set{n_{SB}(\phi),N}$ stable balance points in $U'_N=\cup_{p\in S_N} 
I'_p$ 
 and no unstable balance points.  Hence, $n_{SB}(\psi)\geq 
\min\set{n_{SB}(\phi),N}$ which completes the proof of the first claim.
The second claim follows by taking $N= n_{SB}(\phi)<\infty$. As 
$n_{B}(\phi)=n_{SB}(\phi)$, there are no balance points in 
$\mathbb{S}^1\backslash U'_N$ and 
so small $C^0$ perturbations of $\phi$ also have no balance points in 
$\mathbb{S}^1\backslash U'_N$. In other words, by shrinking $V_N$ one 
can ensure that  $n_{B}(\psi)=n_{SB}(\psi)=n_{SB}(\phi)=n_B(\phi)$ for 
all $\psi\in V_N$.
\end{proof}
\begin{lem} \label{StabBalLem}
 If $k\geq 1$ and $\phi \in  \partial{\BDiff}_+^{k, \alpha}(\mathbb{S}^1, 4)$, then $\phi$ has at least one pair of unstable 
balance points.
\end{lem}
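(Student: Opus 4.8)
The plan is to argue by contradiction: I would suppose that $\phi$ has no pair of unstable balance points. Since the defining condition $\phi'(p)=\phi'(I(p))$ of instability is symmetric in $p$ and $I(p)$, and $I(p)$ is a balance point whenever $p$ is, this is equivalent to assuming that $\phi$ has \emph{no} unstable balance point at all. The goal is then to show that such a $\phi$ cannot lie on the boundary $\partial\BDiff_+^{k,\alpha}(\mathbb{S}^1,4)$, contradicting the hypothesis.

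First I would record the structure of the balance set. Writing $\tilde\phi$ for a lift of $\phi$ to $\Real$ normalized by $\tilde\phi(x+2\pi)=\tilde\phi(x)+2\pi$, the involution $I$ lifts to $x\mapsto x+\pi$, and since $\tilde\phi$ is increasing one checks that $p$ is a balance point exactly when $h(x):=\tilde\phi(x+\pi)-\tilde\phi(x)-\pi=0$, where $x=\theta(p)$. Thus the balance set $B$ is the zero set of the continuous (in fact $C^1$, as $k\geq 1$) function $h$, hence closed, and a balance point is stable precisely when $h'(x)=\tilde\phi'(x+\pi)-\tilde\phi'(x)\neq 0$. As already observed in the proof of Lemma~\ref{ContBalLem}, the inverse function theorem then shows that each stable balance point is isolated in $B$. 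Under the contradiction hypothesis every balance point is stable, hence isolated, so $B$ is a closed discrete subset of the compact manifold $\mathbb{S}^1$ and is therefore finite. In particular $n_B(\phi)=n_{SB}(\phi)<\infty$, which is exactly the situation to which the second part of Lemma~\ref{ContBalLem} applies: there is a $C^1$ neighborhood $V$ of $\phi$ on which $n_B$ is constant, equal to $n_B(\phi)$.

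It would then remain to contradict $\phi\in\partial\BDiff_+^{k,\alpha}(\mathbb{S}^1,4)$. By definition of the topological boundary (taken in the $C^{k,\alpha}$ topology) there are $\phi_n\to\phi$ with $n_B(\phi_n)\geq 4$ and $\psi_n\to\phi$ with $n_B(\psi_n)<4$; since $n_B$ takes values in the even integers together with $\infty$ and is always $\geq 2$, the latter forces $n_B(\psi_n)=2$. Because $k\geq 1$, $C^{k,\alpha}$ convergence implies $C^1$ convergence, so for large $n$ both $\phi_n$ and $\psi_n$ lie in $V$. But then $4\leq n_B(\phi_n)=n_B(\phi)=n_B(\psi_n)=2$, which is absurd. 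Hence $\phi$ has an unstable balance point $p$, and as $I$ preserves both the balance condition and stability and is fixed-point free, $\{p,I(p)\}$ is a pair of distinct unstable balance points.

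The main obstacle I anticipate is the passage from ``no unstable balance points'' to the finiteness $n_B(\phi)<\infty$ that unlocks Lemma~\ref{ContBalLem}; once $n_B$ is known to be locally constant near $\phi$, the remainder is a clean topological squeeze. The one other point needing care is the mismatch of topologies—Lemma~\ref{ContBalLem} controls behavior in a $C^1$ neighborhood, whereas the boundary is formed in $C^{k,\alpha}$—which is resolved by the continuity of the inclusion $\Diff_+^{k,\alpha}(\mathbb{S}^1)\hookrightarrow\Diff_+^{1}(\mathbb{S}^1)$.
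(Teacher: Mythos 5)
Your proposal is correct and takes essentially the same approach as the paper: both arguments reduce to the $C^1$ setting via continuity of the inclusion $\Diff_+^{k,\alpha}(\mathbb{S}^1)\hookrightarrow\Diff_+^{1}(\mathbb{S}^1)$, use the isolation of stable balance points together with the local-constancy statement of Lemma~\ref{ContBalLem}, and contradict membership in the topological boundary. The only difference is organizational: the paper splits into the cases $n_B(\phi)=2$, $4\leq n_B(\phi)<\infty$, and $n_B(\phi)=\infty$, whereas you fold all three into a single reductio by first showing that the absence of unstable balance points forces the balance set to be finite.
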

\begin{proof}
 If $\phi \in  \partial{\BDiff}_+^{k, \alpha}(\mathbb{S}^1, 4)$ for $k\geq 1$, then $\phi \in  \partial{\BDiff}_+^{1}(\mathbb{S}^1, 4)$.  
Hence, we can restrict attention to the $C^1$ setting.
If $n_B(\phi)=2$, then the two balance points must be unstable as otherwise Lemma \ref{ContBalLem} would imply that any 
$C^1$ perturbation of $\phi$ also has only two balance points -- that is $\phi \not\in 
\overline{\BDiff}_+^{1}(\mathbb{S}^1, 4)$.  If $n_B(\phi)=\infty$, then it must have some unstable balance 
points, as the set balance points is closed while the set of stable balance points is discrete and so is a proper subset.  Finally, if $4\leq n_B(\phi)<\infty$, then Lemma 
\ref{ContBalLem} implies at least two of them are unstable.  Otherwise, any $C^1$ perturbation of $\phi$ would 
continue to have at least four balance points, i.e., 
 $\phi$ is in the interior of ${\BDiff}_+^{1}(\mathbb{S}^1, 4)$.
\end{proof}

We next introduce the appropriate energy space for $\mathcal{E}_G^*$ -- we work with this functional as it has nicer analytic properties.  
It will be convenient to think of $\mathcal{E}_G^*$ as a functional on $\Diff_+^2(\mathbb{S}^1)$ by considering 
$\mathcal{E}_G^*[\phi]=\mathcal{E}_G^*[\sigma]$ where $\phi=\phi_\sigma$ and $L(\sigma)=2\pi$.
To motivate our choice of energy space set $u=\log \phi'\in C^\infty(\mathbb{S}^1)$.  
Notice, that $u$ satisfies the non-linear constraint
$$
\int_{\mathbb{S}^1} e^u \; \mathrm{d}\theta =2\pi.
$$
A simple change of variables shows that the functional
\begin{equation*}
 E[u]=\int_{\mathbb{S}^1}\frac{1}{4} (u')^2-e^{2u} \; \mathrm{d}\theta 
\end{equation*}
satisfies $E[u]+2\pi=\mathcal{E}_G^*[\phi]$.
The Euler-Lagrange equation of $E[u]$ with respect to the constraint is a semi-linear ODE of the form
\begin{equation}\label{SemiLinODE}
 \frac{1}{4}u''+ e^{2u}+\beta e^u =0
\end{equation}
for some $\beta$.
Define the following energy space for $E$
\[
H^1_{2\pi}(\mathbb{S}^1)=\set{u\in H^1(\mathbb{S}^1):\int_{\mathbb{S}^1} \mathrm{e}^u \; \mathrm{d}\theta=2\pi}\subset   H^1(\mathbb{S}^1).
\]
The Sobolev embedding theorem implies $H^1(\mathbb{S}^1)\subset C^{1/2}(\mathbb{S}^1)$.  Hence, 
$H^1_{2\pi}(\mathbb{S}^1)$ is a closed subset of $H^1(\mathbb{S}^1)$ with respect to the weak topology of  $H^1(\mathbb{S}^1)$.  
Let
\begin{equation*}
\HDiff_+(\mathbb{S}^1)=\set{\phi\in \Diff_+^{1}(\mathbb{S}^1): \log \phi' \in H^1_{2\pi}(\mathbb{S}^1)}\subset  
\Diff^{1,1/2}_+(\mathbb{S}^1).
\end{equation*}
have a strong (resp.~weak) topology determined by $\phi_i\to \phi$ when $\log \phi'_i \to \log \phi'$ strongly in $H^1(\mathbb{S}^1)$ 
 (resp.~weakly in  $H^1(\mathbb{S}^1)$). Clearly, $\mathcal{E}_G^*$ extends to $\HDiff_+(\mathbb{S}^1)$.  As $\Diff^{1,1}_+(\mathbb{S}^1)\subset \HDiff_+(\mathbb{S}^1)$, the family given by \eqref{BadFamily2} satisfies $\psi_\tau\in\HDiff_+(\mathbb{S}^1)$ and one computes that $\mathcal{E}_G^*[\psi_\tau]=0$.

We will need the following smoothing lemma:
\begin{lem}\label{SmoothingLem}
 For $\phi \in  \HDiff_+(\mathbb{S}^1)$, there exists a sequence $\phi_i\in \Diff_+^\infty(\mathbb{S}^1)$ with $\phi_i\to \phi$ in the 
strong topology of $\HDiff_+(\mathbb{S}^1)$.  Furthermore, if $\phi$ satisfies $\phi\circ I=I\circ \phi$, then the $\phi_i$ may be chosen 
so $\phi_i\circ I=I \circ \phi_i$.
\end{lem}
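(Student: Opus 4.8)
The plan is to transport the problem to the logarithmic derivative $u=\log\phi'$, smooth there, and reconstruct a diffeomorphism by integration. The key elementary observation is that any $v\in C^\infty(\SS)$ satisfying the constraint $\int_{\SS}e^{v}\,\mathrm{d}\theta=2\pi$ determines an element $\psi\in\Diff_+^\infty(\SS)$: the lift $\tilde\psi(\theta)=\int_0^\theta e^{v(s)}\,\mathrm{d}s$ obeys $\tilde\psi(\theta+2\pi)=\tilde\psi(\theta)+2\pi$ (since $e^v$ is $2\pi$-periodic and integrates to $2\pi$), so it descends to the circle, and $\psi'=e^v>0$ gives $\log\psi'=v$. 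Since the topology of $\HDiff_+(\SS)$ only records $\log\psi'$, the ambiguity in the base point of $\psi$ is irrelevant. Thus it suffices to construct smooth $v_i$ with $\int_{\SS}e^{v_i}\,\mathrm{d}\theta=2\pi$ and $v_i\to u$ in $H^1(\SS)$, where $u=\log\phi'\in H^1_{2\pi}(\SS)$; the reconstructed $\phi_i\in\Diff_+^\infty(\SS)$ then satisfy $\log\phi_i'=v_i\to\log\phi'$ in $H^1$, which is exactly strong convergence in $\HDiff_+(\SS)$.

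First I would mollify: let $u_i=\rho_{\eps_i}*u$ be the convolution of $u$ with a standard approximate identity on $\SS$, $\eps_i\to 0$, so that $u_i\in C^\infty(\SS)$ and $u_i\to u$ in $H^1(\SS)$. The mollified functions need not satisfy the constraint, so I would correct by a single additive constant: by the Sobolev embedding $H^1(\SS)\hookrightarrow C^0(\SS)$, $u_i\to u$ uniformly, hence $\int_{\SS}e^{u_i}\,\mathrm{d}\theta\to\int_{\SS}e^{u}\,\mathrm{d}\theta=2\pi$. Setting $c_i=\log\bigl(2\pi\,/\!\int_{\SS}e^{u_i}\,\mathrm{d}\theta\bigr)\to 0$, the functions $v_i=u_i+c_i$ are smooth, satisfy $\int_{\SS}e^{v_i}\,\mathrm{d}\theta=2\pi$, and still converge to $u$ in $H^1(\SS)$ because constants have $H^1$-norm $\sqrt{2\pi}\,|c_i|\to 0$. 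This proves the first assertion.

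For the symmetric statement, note that $\phi\circ I=I\circ\phi$ reads $\phi(\theta+\pi)=\phi(\theta)+\pi$ on the lift, so differentiating shows $\phi'$, and hence $u=\log\phi'$, is $\pi$-periodic, i.e. $u\circ I=u$. This invariance survives convolution: the change of variables $s\mapsto s+\pi$ in $(\rho_{\eps_i}*u)(\theta+\pi)$, combined with $u\circ I=u$ and the translation-invariance of the integral over $\SS$, gives $u_i\circ I=u_i$, and adding the constant $c_i$ leaves this unchanged, so every $v_i$ is $\pi$-periodic. The reconstruction is then automatically symmetric: $\pi$-periodicity of $e^{v_i}$ together with $\int_{\SS}e^{v_i}\,\mathrm{d}\theta=2\pi$ forces $\int_{\theta}^{\theta+\pi}e^{v_i}\,\mathrm{d}s=\pi$ for every $\theta$, whence $\phi_i(\theta+\pi)=\phi_i(\theta)+\pi$, that is $\phi_i\circ I=I\circ\phi_i$.

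I expect the only delicate point to be the interaction of the nonlinear constraint $\int_{\SS}e^{u}\,\mathrm{d}\theta=2\pi$ with $H^1$-convergence: it is the Sobolev embedding $H^1(\SS)\hookrightarrow C^0(\SS)$ that makes the one-constant correction both admissible and $H^1$-negligible, and that simultaneously keeps $e^{v_i}$ uniformly positive so that $\phi_i$ is a genuine diffeomorphism. The remaining ingredients — the standard mollification estimates, the reconstruction producing a smooth orientation-preserving circle diffeomorphism, and the persistence of $I$-invariance — are routine.
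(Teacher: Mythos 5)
Your proof is correct, and it takes a route that differs from the paper's in one meaningful respect: you mollify $u=\log\phi'$ and then repair the nonlinear constraint $\int_{\SS}e^{u}\,\mathrm{d}\theta=2\pi$ by an additive constant, whereas the paper mollifies $\phi'$ itself, setting $P_\epsilon=\int_{\SS}\nu_\epsilon(\cdot,p)\,\phi'(p)\,\mathrm{d}\theta(p)$, for which the relevant constraint is the \emph{linear} one $\int_{\SS}\phi'\,\mathrm{d}\theta=2\pi$ and is therefore preserved automatically by convolution against a probability kernel. The trade-offs are real in both directions. The paper must convert strong $H^1$ convergence $P_\epsilon\to\phi'$ into strong $H^1$ convergence $\log P_\epsilon\to\log\phi'$, which costs an argument using the uniform lower bound $P_\epsilon\geq\min_{\SS}\phi'>0$ together with the Sobolev embedding; your route gets convergence of the logarithms for free, since that is the variable you mollify, at the price of the constant-correction step, which your argument handles correctly (Sobolev embedding gives $u_i\to u$ uniformly, hence $\int_{\SS}e^{u_i}\,\mathrm{d}\theta\to2\pi$, hence $c_i\to0$ and $\|c_i\|_{H^1}\to0$). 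The symmetry statements are treated in the same spirit but by different devices: the paper builds $I$-invariance into the kernel via $\nu_\epsilon(I(p),I(q))=\nu_\epsilon(p,q)$, while you use translation-invariance of convolution plus $\pi$-periodicity of $u$; both then conclude from the same elementary fact that a $\pi$-periodic positive derivative integrating to $2\pi$ over $\SS$ forces $\phi_i(\theta+\pi)=\phi_i(\theta)+\pi$. Finally, your observation that the base-point ambiguity in reconstructing $\phi_i$ from $v_i$ is invisible is justified, because the strong topology on $\HDiff_+(\SS)$ records only $\log\phi_i'$; the paper instead pins $\phi_\epsilon(p_0)=\phi(p_0)$, which is harmless but not needed for the stated convergence.
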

\begin{proof}
 Fix $p_0\in 
\mathbb{S}^1$, let $\nu_{\epsilon}(p, p_0)$ be a family of $C^\infty$ mollifiers with $\nu_{\epsilon}(p,p_0)\geq 0$, 
$\mathrm{supp}(\nu_{\epsilon}(\cdot, p_0))\subset B_{\epsilon}(p_0)$,
$\nu_{\epsilon}(p,p_0)=\nu_{\epsilon}(p_0,p)$, $\nu_\epsilon(I(p_0),I(p))=\nu_\epsilon(p_0,p)$ and $\int_{\mathbb{S}^1} \nu_{\epsilon}(p , 
p_0) \; \mathrm{d}\theta(p) =1$. That is, $\lim_{\epsilon\to 0} \nu_\epsilon (p, p_0)= 
\delta_{p_0}(p)$ the Dirac delta with mass at $p_0$. Set
\[
P_\epsilon=\int_{\mathbb{S}^1} \nu_{\epsilon} (\cdot , p ) 
 \phi'(p)\; \mathrm{d}\theta (p)\in C^\infty(\mathbb{S}^1). 
\]
Hence, $\int_{\mathbb{S}^1} P_\epsilon \; \mathrm{d}\theta=2\pi$ and $P_{\epsilon}\geq \min_{\mathbb{S}^1} \phi'>0$. It follows, that there are $\phi_\epsilon \in 
\Diff_+^\infty(\mathbb{S}^1)$ so that $\phi_\epsilon(p_0)=\phi(p_0)$ and $\phi_\epsilon'=P_\epsilon$. 
As  $\log \phi'\in H^1(\mathbb{S}^1)$, $ \phi'\in H^1(\mathbb{S}^1)$ and so $P_\epsilon \to\phi'$ strongly in $H^1(\mathbb{S}^1)$.  This convergence together with the uniform lower bound on $P_\epsilon$ and the Sobolev embedding theorem implies that   $\log P_\epsilon$ 
converge strongly in $H^1(\mathbb{S}^1)$ to $\log 
\phi'$ -- that is, $\phi_\epsilon\to\phi$ strongly in $\HDiff_+(\mathbb{S}^1)$.

Finally, we observe that if $\phi\circ I=I\circ 
\phi$, then $\phi'\circ 
I=\phi'$ and so $P_\epsilon \circ 
I=P_\epsilon$. In particular, if $\phi\circ I=I\circ 
\phi$, then $\phi_\epsilon\circ I=I\circ \phi_\epsilon$.  
\end{proof}
\begin{lem} \label{SymCurvesTwoLem}
 If $\phi\in \HDiff_+(\mathbb{S}^1)$ satisfies $\phi\circ I=I\circ \phi$, then $\mathcal{E}_G^*[\phi]\geq 0$
 with equality if and only if $\phi\in \Mob(\mathbb{S}^1)$.
\end{lem}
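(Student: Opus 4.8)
The plan is to reduce to the smooth strictly convex case already handled by Corollary~\ref{SymCurvesOne} via the duality of Proposition~\ref{DualGProp}, and then to transfer both the inequality and the rigidity from $\Diff_+^\infty(\mathbb{S}^1)$ to $\HDiff_+(\mathbb{S}^1)$, the inequality through the smoothing of Lemma~\ref{SmoothingLem} and the rigidity through a regularity argument for the associated semilinear ODE.

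First I would dispose of the smooth case. Suppose $\phi\in\Diff_+^\infty(\mathbb{S}^1)$ satisfies $\phi\circ I=I\circ\phi$; composing on both sides by $\phi^{-1}$ shows $\phi^{-1}\circ I=I\circ\phi^{-1}$ as well. Let $\tau\in\mathcal{D}_+^\infty$ be the curve with $\phi_\tau=\phi^{-1}$ and $L(\tau)=2\pi$ (the base point is irrelevant to the functionals). Then $\tau^*$ has $\phi_{\tau^*}=\phi$ and $L(\tau^*)=2\pi$, so Proposition~\ref{DualGProp} gives $\mathcal{E}_G[\tau]=\mathcal{E}_G^*[\tau^*]=\mathcal{E}_G^*[\phi]$. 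Since $\phi_\tau$ inherits the symmetry, Corollary~\ref{SymCurvesOne} yields $\mathcal{E}_G^*[\phi]=\mathcal{E}_G[\tau]\ge0$, with equality if and only if $\tau\in\mathcal{O}$, that is $\phi^{-1}\in\Mob(\mathbb{S}^1)$, equivalently $\phi\in\Mob(\mathbb{S}^1)$.

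Next, for the inequality on $\HDiff_+(\mathbb{S}^1)$, given a symmetric $\phi$ I would apply Lemma~\ref{SmoothingLem} to obtain symmetric $\phi_i\in\Diff_+^\infty(\mathbb{S}^1)$ with $u_i:=\log\phi_i'\to u:=\log\phi'$ strongly in $H^1(\mathbb{S}^1)$. Writing $\mathcal{E}_G^*[\phi]=E[u]+2\pi$ with $E[u]=\int_{\mathbb{S}^1}\tfrac{1}{4}(u')^2-e^{2u}\,\mathrm{d}\theta$, the gradient term is continuous under strong $H^1$ convergence, while the embedding $H^1(\mathbb{S}^1)\subset C^{1/2}(\mathbb{S}^1)$ forces $u_i\to u$ uniformly and hence $\int_{\mathbb{S}^1} e^{2u_i}\,\mathrm{d}\theta\to\int_{\mathbb{S}^1} e^{2u}\,\mathrm{d}\theta$. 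Thus $\mathcal{E}_G^*[\phi_i]\to\mathcal{E}_G^*[\phi]$, and since each $\mathcal{E}_G^*[\phi_i]\ge0$ by the smooth case, $\mathcal{E}_G^*[\phi]\ge0$.

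The equality case in $\HDiff_+(\mathbb{S}^1)$ is the main obstacle, since the convergence above detects the value of the functional but not rigidity, and the duality $\phi\mapsto\phi^{-1}$ is awkward to control in this low regularity. If $\mathcal{E}_G^*[\phi]=0$, then $u$ minimizes $E$ among symmetric competitors subject to $\int_{\mathbb{S}^1}e^u\,\mathrm{d}\theta=2\pi$, so it solves the weak Euler--Lagrange equation, namely the semilinear ODE \eqref{SemiLinODE} for some multiplier $\beta$; here the $I$-symmetry of both $E$ and the constraint is what lets one upgrade the a~priori symmetric variations to arbitrary ones and conclude that the full equation holds. Since $u\in H^1(\mathbb{S}^1)\subset C^0(\mathbb{S}^1)$ makes the right-hand side $-4(e^{2u}+\beta e^u)$ continuous, a standard bootstrap gives $u\in C^\infty(\mathbb{S}^1)$, whence $\phi\in\Diff_+^\infty(\mathbb{S}^1)$, and the smooth case then forces $\phi\in\Mob(\mathbb{S}^1)$. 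The delicate point is precisely this regularity step: ensuring the Lagrange multiplier exists and that restricting to symmetric variations does not obstruct deriving the full Euler--Lagrange equation needed to start the bootstrap.
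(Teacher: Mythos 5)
Your proposal is correct and follows essentially the same route as the paper: smooth symmetric approximation via Lemma~\ref{SmoothingLem}, transfer of the inequality from the smooth case through the duality $\mathcal{E}_G^*[\phi]=\mathcal{E}_G[\tau]$ with $\phi_\tau=\phi^{-1}$ and Corollary~\ref{SymCurvesOne}, and, in the equality case, criticality of $u=\log\phi'$ for the constrained functional followed by ODE regularity and a return to the smooth case. Your elaboration of the equality step (Lagrange multiplier, symmetric criticality, bootstrap) is exactly what the paper compresses into its final two sentences, and the symmetry argument you flag as delicate does go through since anti-symmetric variations automatically annihilate the first variation of both $E$ and the constraint.
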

\begin{proof}
 By Lemma \ref{SmoothingLem}, there are a sequence of $\phi_i\in \Diff^\infty_+(\mathbb{S}^1)$, with $\phi_i\circ I=I\circ \phi_i$ and 
$\phi_i\to \phi$ strongly in $\HDiff_+(\mathbb{S}^1)$.  In particular, $\mathcal{E}_G^*[\phi_i]\to \mathcal{E}_G^*[\phi]$.  Set 
$\psi_i=\phi_i^{-1}$ and note that $\psi_i\circ I=I\circ \psi_i$.  Further, let $\sigma_i\in \mathcal{D}^+_i$ have induced diffeomorphism 
$\psi_i$. 
By \eqref{TwoIneqSchwarzian}, Proposition \ref{DualGProp} and Corollary \ref{SymCurvesOne}, 
\[
\mathcal{E}_G^*[\phi_i]=\mathcal{E}_G^*[\sigma^*_i]=\mathcal{E}_G[\sigma_i]\geq 0,
\]
proving the desired inequality.
If one has equality, then the inequality implies that $\phi$ is critical with respect to variations preserving the symmetry.  It follows 
that $\phi$ is smooth and so $\phi\in \Mob(\mathbb{S}^1)$ by Corollary \ref{SymCurvesOne}.
\end{proof}

A symmetrization argument and Lemma \ref{SymCurvesTwoLem} imply:
\begin{prop} \label{BoundaryLowBndProp} If $\phi\in \HDiff_+(\mathbb{S}^1)\cap 
\partial{\BDiff}_+^1(\mathbb{S}^1, 4)$, then 
 \begin{equation*}
  \mathcal{E}_G^*[\phi]\geq 0
 \end{equation*}
 with equality if and only if $\phi=\psi_\tau\circ \hat{\phi}$ where $\psi_\tau$ is of the form \eqref{BadFamily2} for some $\tau\in \Real$ 
and $\hat{\phi}\in \Mob(\mathbb{S}^1)$.
 \end{prop}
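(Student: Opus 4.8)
The plan is to run an $I$-symmetrization argument anchored at a pair of unstable balance points and then quote Lemma~\ref{SymCurvesTwoLem}. Since $\phi\in\partial\BDiff_+^1(\mathbb{S}^1,4)$, Lemma~\ref{StabBalLem} furnishes a pair of unstable balance points $p,I(p)$; let $\gamma_+,\gamma_-$ be the two components of $\mathbb{S}^1\setminus\{p,I(p)\}$. I would define two symmetrizations by $\phi_\pm=\phi$ on $\gamma_\pm$ and $\phi_\pm=I\circ\phi\circ I$ on $\gamma_\mp$. The balance condition $\phi(I(p))=I(\phi(p))$ makes the images of the two arcs exactly complementary, so each $\phi_\pm$ is an honest orientation-preserving degree-one bijection of $\mathbb{S}^1$ with $\int_{\mathbb{S}^1}\phi_\pm'\,\mathrm{d}\theta=2\pi$ automatic, and by construction $\phi_\pm\circ I=I\circ\phi_\pm$.

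This is exactly where both halves of the definition of an unstable balance point are used. Continuity of $\phi_\pm$ at $p,I(p)$ is automatic, whereas continuity of $\log\phi_\pm'$ across $p,I(p)$ is precisely the unstability $\phi'(p)=\phi'(I(p))$. Since $\log\phi'\in H^1(\mathbb{S}^1)$, and a function that is $H^1$ on each arc and matches continuously at the endpoints is $H^1$ on the whole circle, this shows $\phi_\pm\in\HDiff_+(\mathbb{S}^1)$. Thus each $\phi_\pm$ is an $I$-symmetric element of $\HDiff_+(\mathbb{S}^1)$ to which Lemma~\ref{SymCurvesTwoLem} applies.

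The computational heart is the identity $\mathcal{E}_G^*[\phi]=\tfrac12\big(\mathcal{E}_G^*[\phi_+]+\mathcal{E}_G^*[\phi_-]\big)$. Writing $u=\log\phi'$ and recalling $\mathcal{E}_G^*[\phi]=\int_{\mathbb{S}^1}\tfrac14(u')^2-\mathrm{e}^{2u}\,\mathrm{d}\theta+2\pi$, the energy density is invariant under the isometry $I$, so $\mathcal{E}_G^*[\phi_\pm]-2\pi=2\int_{\gamma_\pm}\tfrac14(u')^2-\mathrm{e}^{2u}\,\mathrm{d}\theta$; adding and recombining the two arc integrals gives the identity. Since Lemma~\ref{SymCurvesTwoLem} yields $\mathcal{E}_G^*[\phi_\pm]\geq0$, the inequality $\mathcal{E}_G^*[\phi]\geq0$ follows at once. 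In the equality case $\mathcal{E}_G^*[\phi]=0$ forces $\mathcal{E}_G^*[\phi_+]=\mathcal{E}_G^*[\phi_-]=0$, so the rigidity in Lemma~\ref{SymCurvesTwoLem} gives $\phi_+,\phi_-\in\Mob(\mathbb{S}^1)$; hence $\phi$ agrees with a M\"obius map on each of $\gamma_\pm$, the two pieces glued $C^1$ at the antipodal pair $p,I(p)$. Then $\phi_+^{-1}\circ\phi$ is the identity on $\gamma_+$ and equals the M\"obius map $\phi_+^{-1}\circ\phi_-$ on $\gamma_-$, and the $C^1$-matching shows this M\"obius map fixes $p,I(p)$ with unit derivative, hence is parabolic. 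By the classification of such extensions recorded after \eqref{BadFamily3}, $\phi_+^{-1}\circ\phi$ is a $\psi_\tau$ of the form \eqref{BadFamily2} up to the rotation carrying $\{\eE_2,-\eE_2\}$ to $\{p,I(p)\}$; absorbing the remaining M\"obius factors (and using the $\Mob$-invariance of $\mathcal{E}_G^*$ to normalize the balance pair to $\pm\eE_2$) puts $\phi$ in the stated form $\psi_\tau\circ\hat\phi$. Conversely, for $\phi$ of this form both symmetrizations $\phi_\pm$ are M\"obius, so the energy identity forces $\mathcal{E}_G^*[\phi]=0$.

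I expect the main obstacle to lie in the equality analysis rather than the inequality. The inequality is robust, resting only on the additive splitting of the energy over the two arcs and the reflection-invariance of its density. The delicate points are (i) verifying the $C^1$-regularity of the glued symmetrizations, which is exactly what unstability buys and what Lemma~\ref{StabBalLem} guarantees is available on the boundary $\partial\BDiff_+^1(\mathbb{S}^1,4)$, and (ii) matching the resulting two-piece M\"obius map with the normal form \eqref{BadFamily2}, keeping careful track of the composition order and of the rotation needed to locate the balance points at $\pm\eE_2$.
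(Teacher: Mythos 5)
Your proof follows the paper's argument almost step for step: Lemma \ref{StabBalLem} supplies the unstable balance pair, your reflected maps $\phi_\pm$ are exactly the paper's symmetrization $\tilde{u}$ (your reading of the hypotheses is the right one: the balance condition makes $\phi_\pm$ a bijection with $\int_{\mathbb{S}^1}\phi_\pm'\,\mathrm{d}\theta=2\pi$, and unstability is precisely what makes $\log\phi_\pm'$ continuous across the gluing points), and Lemma \ref{SymCurvesTwoLem} then gives the inequality. Your exact splitting $\mathcal{E}_G^*[\phi]=\tfrac12\left(\mathcal{E}_G^*[\phi_+]+\mathcal{E}_G^*[\phi_-]\right)$ is a slightly cleaner bookkeeping than the paper's ``up to relabeling'' choice of the smaller arc, but it is the same computation, and your equality analysis agrees with the paper's up through the conclusion that $\phi$ is glued from M\"obius maps $\phi_\pm$ whose transition map $\phi_+^{-1}\circ\phi_-$ fixes $p,I(p)$ with unit derivative.

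The gap is the final sentence, ``absorbing the remaining M\"obius factors.'' At that point you have $\phi=(\phi_+\circ\rho^{-1})\circ\psi_\tau\circ\rho$, with $\rho$ the rotation carrying $p$ to $\eE_2$: a M\"obius factor sits to the \emph{left} of $\psi_\tau$, while the stated normal form allows M\"obius factors only on the \emph{right}, and these cannot be exchanged. Indeed, for $\tau'\neq 0$ a map $\psi_{\tau'}\circ\hat{\phi}$ fails to be $C^2$ exactly on the antipodal pair $\hat{\phi}^{-1}(\set{\eE_2,-\eE_2})$, which it \emph{maps onto} $\set{\eE_2,-\eE_2}$; this last property is unchanged by precomposition, so the right $\Mob(\mathbb{S}^1)$-invariance you invoke can normalize where the balance pair sits in the domain but never where it is sent. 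In fact the literal statement fails: for $\tau\neq 0$ and $\rho_{\pi/2}$ the rotation by $\pi/2$, the map $\rho_{\pi/2}\circ\psi_\tau$ lies in $\HDiff_+(\mathbb{S}^1)\cap\partial\BDiff_+^1(\mathbb{S}^1,4)$ and has $\mathcal{E}_G^*[\rho_{\pi/2}\circ\psi_\tau]=\mathcal{E}_G^*[\psi_\tau]=0$ (postcomposition by a rotation changes neither $\phi'$ nor the balance points, and the approximating family $\rho_{\pi/2}\circ\psi_\tau^\lambda$ built from \eqref{BadFamily3} still has $n_B=\infty$), yet it sends its non-$C^2$ pair $\set{\eE_2,-\eE_2}$ to $\set{\eE_1,-\eE_1}$ and so is not of the form $\psi_{\tau'}\circ\hat{\phi}$. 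In fairness, the paper's own proof stumbles at the same spot, where it asserts without justification that $\phi_\pm(\gamma_+)=\gamma_+$. What both arguments actually establish is the classification $\phi=\rho\circ\psi_\tau\circ\hat{\phi}$ with $\rho$ a rotation, equivalently $\phi'=(\psi_\tau\circ\hat{\phi})'$; this corrected form is what should be carried forward, and it costs nothing downstream, since a left rotation changes neither $C^2$-regularity, nor $n_B$, nor membership in $\Mob(\mathbb{S}^1)$, so Proposition \ref{MainEstProp} and Theorem \ref{MainThm} are unaffected.
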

\begin{proof}
Let $\phi \in \HDiff^+(\mathbb{S}^1)\cap 
\partial{\BDiff}_+^1(\mathbb{S}^1, 4)$. As $\phi \in \Diff_+^{1,\frac{1}{2}}(\mathbb{S}^1)$, Lemma \ref{StabBalLem} implies that 
$\phi$ has at least one unstable balance point $p_0$. Let $\gamma_\pm$ be the two components of $\mathbb{S}^1\backslash \set{p_0, 
I(p_0)}$. 
Up to relabeling, we may assume that
\begin{equation*}
 \mathcal{E}_G^*[\phi]\geq 2 \left( \int_{\gamma_-} \frac{1}{4}(u')^2 -\mathrm{e}^{2u} \; \mathrm{d}\theta+2\pi\right)
 \end{equation*}
 where $u=\log \phi'$.
Now define 
\begin{equation*}
\tilde{u}(p)=\left \{ \begin{array}{cc} u(p) & p\in \bar{\gamma}_- \\ u(I(p)) & p \in \gamma_+ \end{array} \right.
\end{equation*}
Here, $\bar{\gamma}_-$ is the closure of $\gamma_-$ in $\mathbb{S}^1$.  
Clearly, $\tilde{u}$ is continuous,  $\int_{\mathbb{S}^1} \mathrm{e}^{\tilde{u}} \; \mathrm{d}\theta=2\pi$ and
\begin{equation*}
 \mathcal{E}_G^*[\phi]\geq E[\tilde{u}] +2\pi.
 \end{equation*}
Hence, there is a $\tilde{\phi}\in \Diff_+^{1,1}(\mathbb{S}^1)\subset \HDiff_+(\mathbb{S}^1)$ so that $\tilde{u}=\log\tilde{\phi}'$.  By 
construction, $\tilde{\phi}\circ I =I\circ 
\tilde{\phi}$ and so by Lemma \ref{SymCurvesTwoLem}
\begin{equation*}
  \mathcal{E}_G^*[\phi]\geq \mathcal{E}_G^*[\tilde{\phi}]\geq 0, 
\end{equation*}
with equality if and only if $\tilde{\phi}\in \Mob(\mathbb{S}^1)$.  

In the case of equality for $\phi$
we could reflect either $\gamma_+$ or $\gamma_-$, hence the preceding argument implies, $\phi|_{\gamma_\pm}=\phi_\pm$ for $\phi_\pm \in 
\Mob(\mathbb{S}^1)$ which satisfy $\phi_\pm(\gamma_+)=\gamma_+$. By precomposing with a rotation, we may assume that 
$\set{p_0,I(p_0)}=\set{\eE_2,-\eE_2}$ and $\theta(\gamma_+)=\left(\frac{\pi}{2},\frac{3\pi}{2}\right)$.
Taking $\hat{\phi}=\phi_-\in \Mob(\mathbb{S}^1)$, one has $\phi\circ 
\hat{\phi}^{-1}\in \Diff^1_+(\mathbb{S}^1)$ and is the identity map on $\gamma_-$ and some element of $\Mob(\mathbb{S}^1)$ on $\gamma_+$.
This implies that $\phi\circ\hat{\phi}^{-1}=\psi_\tau$ where $\psi_\tau$ is of the form $\eqref{BadFamily2}$ for some 
$\tau\in \Real$. That is, $\phi=\psi_\tau \circ \hat{\phi}$.
\end{proof}

We next analyze certain ODEs generalizing \eqref{SemiLinODE}.  
\begin{prop}\label{ODEProp}
Fix $\gamma\geq 2\pi$. If $u\in C^\infty(\mathbb{S}^1)$ satisfies the ODE
 \begin{equation} \label{ODEeqn}
  \frac{1}{4} u'' - \alpha \mathrm{e}^{2u}+\beta \mathrm{e}^u =0
 \end{equation}
and the constraints
\begin{equation} \label{ODEconstraintsEqn}
 \int_{\mathbb{S}^1} \mathrm{e}^u \; \mathrm{d}\theta=2\pi \mbox{ and }  \int_{\mathbb{S}^1} \mathrm{e}^{2u} \; \mathrm{d}\theta=\gamma,
\end{equation}
then either $\gamma=2\pi$, $\alpha=\beta$ and $u \equiv 0$ or $\gamma>2\pi$ and there is an $n\in \mathbb{N}$ so that 
$\alpha=-n^2$ and $\beta=-\frac{\gamma}{2\pi} n^2$ and
\begin{equation*}
 u(p)=-\log\left(\frac{\gamma}{2\pi}+\sqrt{\left(\frac{\gamma}{2\pi}\right)^2-1} \cos( n(\theta(p)-\theta_0))\right)
\end{equation*}
for some $\theta_0$.
In this case,
\begin{equation*}
 E[u]=-2\pi \frac{n^2}{4} +\frac{(n^2-4) }{4}\gamma.
\end{equation*}
Hence, if $n\geq 2$, then 
\begin{equation*}
 E[u]\geq -2\pi.
\end{equation*}
with equality if and only if $\gamma=2\pi$ or $n=2$.
\end{prop}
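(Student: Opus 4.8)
The plan is to treat \eqref{ODEeqn} as an autonomous-after-substitution ODE for the positive function $w=\mathrm{e}^{u}>0$, extract a first integral (conserved energy), and then use the periodicity and the two integral constraints \eqref{ODEconstraintsEqn} to pin down $\alpha$, $\beta$ and the explicit form of $u$. First I would rewrite the equation in terms of $w$: since $u=\log w$, one has $u'=w'/w$ and $u''=w''/w-(w')^2/w^2$, so \eqref{ODEeqn} becomes a second-order ODE for $w$ whose nonlinearity is polynomial in $w$. Multiplying by an appropriate integrating factor (essentially $w'/w^{k}$ for the right $k$) should produce an exact derivative, giving a conserved quantity $H(w,w')=\text{const}$ along solutions. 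The expected outcome is that the phase-plane trajectories are closed curves, so $w$ — and hence $u$ — is periodic, and the reciprocal $1/w$ turns out to satisfy a \emph{linear} constant-coefficient equation $(1/w)''+c(1/w)=d$; this is the structural reason the stated solution has the form $\mathrm{e}^{-u}=A+B\cos(n(\theta-\theta_0))$.

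Once the solution shape $\mathrm{e}^{-u}=A+B\cos(n(\theta-\theta_0))$ is established, the next step is purely bookkeeping: the requirement that $u$ be $2\pi$-periodic and smooth forces the frequency to be an integer $n\in\mathbb{N}$ (this is where the discreteness $\alpha=-n^2$ enters), and substituting back into \eqref{ODEeqn} identifies the coefficients $c$ and $d$, hence $\alpha$ and $\beta$, in terms of $A,B,n$. The two constraints in \eqref{ODEconstraintsEqn} then become two equations for the amplitudes $A,B$: computing $\int_{\mathbb{S}^1}\mathrm{e}^{u}\,\mathrm{d}\theta$ and $\int_{\mathbb{S}^1}\mathrm{e}^{2u}\,\mathrm{d}\theta$ amounts to evaluating $\int_0^{2\pi}(A+B\cos n\theta)^{-1}\,\mathrm{d}\theta$ and $\int_0^{2\pi}(A+B\cos n\theta)^{-2}\,\mathrm{d}\theta$. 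These are standard trigonometric integrals whose values are $2\pi/\sqrt{A^2-B^2}$ and $2\pi A/(A^2-B^2)^{3/2}$. Setting the first equal to $2\pi$ and the second equal to $\gamma$ yields $A^2-B^2=1$ and $A=\gamma/2\pi$, so $A=\gamma/2\pi$ and $B=\sqrt{(\gamma/2\pi)^2-1}$, exactly matching the claimed formula; the degenerate case $B=0$ is precisely $\gamma=2\pi$, forcing $u\equiv 0$ and $\alpha=\beta$.

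For the energy computation I would substitute the explicit $u$ into $E[u]=\int_{\mathbb{S}^1}\tfrac14(u')^2-\mathrm{e}^{2u}\,\mathrm{d}\theta$. The term $\int \mathrm{e}^{2u}\,\mathrm{d}\theta=\gamma$ is already given, so only $\int\tfrac14(u')^2\,\mathrm{d}\theta$ must be evaluated; writing $u'=nB\sin(n(\theta-\theta_0))/(A+B\cos(n(\theta-\theta_0)))$ reduces this to another rational-trigonometric integral which, after using $A^2-B^2=1$ and $A=\gamma/2\pi$, collapses to $2\pi\tfrac{n^2}{4}+\tfrac{n^2}{4}\gamma - \text{(lower order)}$; combining with $-\gamma$ gives the stated value $E[u]=-2\pi\tfrac{n^2}{4}+\tfrac{n^2-4}{4}\gamma$. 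The final inequality is then immediate algebra: for $n\ge 2$ one has $E[u]+2\pi=\tfrac{n^2-4}{4}(2\pi+\gamma)+\text{(a non-negative multiple of }(\gamma-2\pi))$, and since $\gamma\ge 2\pi$ each contribution is non-negative, with equality exactly when $n=2$ or $\gamma=2\pi$.

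\textbf{The main obstacle} I anticipate is the clean extraction of the first integral and the verification that integer frequency is \emph{forced} rather than merely allowed: one must rule out non-integer $n$ using the global periodicity constraint on $\mathbb{S}^1$, and care is needed because the phase-plane argument only gives periodicity of $w$ with \emph{some} period, which must then be shown to divide $2\pi$ an integer number of times. The trigonometric integrals themselves are routine (residues or the Weierstrass substitution), but keeping track of the amplitude normalizations $A^2-B^2=1$ through the energy computation is where sign and bookkeeping errors are most likely to creep in.
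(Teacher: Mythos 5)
Your proposal follows essentially the same route as the paper's proof: a first integral (conservation law) for \eqref{ODEeqn}, the substitution $U=\mathrm{e}^{-u}$ producing a linear constant-coefficient ODE whose $2\pi$-periodicity quantizes the frequency to an integer $n$, and the two constraint integrals (which the paper evaluates by residues, you by standard trigonometric formulas) pinning down the amplitudes $A=\gamma/2\pi$, $B=\sqrt{A^2-1}$; the only cosmetic difference is that the paper extracts $E[u]$ from integrated identities (integrating the conservation law and the ODE) rather than by substituting the explicit solution, though your direct computation also yields the stated value. One small correction to your last step: the correct identity is $E[u]+2\pi=\tfrac{1}{4}\left(n^2-4\right)\left(\gamma-2\pi\right)$, not a decomposition involving $(2\pi+\gamma)$, but this factorization gives the equality cases $n=2$ or $\gamma=2\pi$ immediately.
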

\begin{proof}
It is straightforward to see that \eqref{ODEeqn} has the conservation law
\begin{equation*}
 \frac{1}{4}(u')^2-\alpha \mathrm{e}^{2u}+ 2 \beta \mathrm{e}^u = \eta.
\end{equation*}
Integrating this we see that
\begin{equation*}
E[u]+(1-\alpha) \gamma + 4\pi \beta= 2\pi \eta.
\end{equation*}
However, integrating \eqref{ODEeqn} gives that
\begin{equation*}
 -\alpha \gamma + 2\pi \beta=0
\end{equation*}
and hence
\begin{equation*}
 E[u]=2\pi \eta-\gamma- 2\pi \beta.
\end{equation*}

Now set $U=\mathrm{e}^{-u}$ one has that
\begin{equation*}
 \frac{1}{4}U'' =-\frac{1}{4}\mathrm{e}^{-u} u'' +\frac{1}{4}\mathrm{e}^{-u} (u')^2=-\alpha \mathrm{e}^{u}+\beta +\alpha \mathrm{e}^u -2\beta +\eta \mathrm{e}^{-u}=\eta U 
-\beta
\end{equation*}
That is, $U$ satisfies
\begin{equation*}
 U''-4\eta U= -4\beta.
\end{equation*}
As $U\in C^\infty(\mathbb{S}^1)$, either $U= \frac{\beta}{\eta}$, or $4\eta=-n^2$ for some $n\in \mathbb{Z}^+$ and 
\[
 U= \frac{\beta}{\eta}+C_1 \cos \sqrt{-4\eta} \theta + C_2 \sin \sqrt{-4\eta} \theta
\]
for some constants $C_1, C_2$. 
In the first case, the constraints force $\eta= \beta$ and so $u=0$, $\alpha=\beta=\eta$, $\gamma=2\pi$ and $E=-2\pi$. 

In the second case, we first note that $U>0$ and so 
\begin{equation*}
 \frac{\beta}{\eta}> \sqrt{C_1^2+C_2^2}.
\end{equation*}
Using the calculus of residues, we compute that
\begin{align*}
 \int_{\mathbb{S}^1} \mathrm{e}^u \; \mathrm{d}\theta & = \int_{\mathbb{S}^1} \frac{1}{U} \; \mathrm{d}\theta \\
                            &= \int_{\mathbb{S}^1} \frac{1}{\frac{\beta}{\eta} 
+\frac{C_1}{2}(z^n+z^{-n})+\frac{C_2}{2i}(z^n-z^{-n})} \; \frac{\mathrm{d}z}{i z}
\\&= \frac{2\pi}{\sqrt{\left(\frac{\beta}{\eta}\right)^2-C_1^2-C_2^2}}.
\end{align*}
Keeping in mind that $U>0$, the first constraint is satisfied if and only if
\[
\beta= \eta\sqrt{1+C_1^2+C_2^2}.
\]
Hence, 
\begin{equation*}
 u= -\log \left(\sqrt{1+C_1^2+C_2^2}+C_1 \cos \sqrt{-4\eta}\theta + C_2 \sin \sqrt{-4\eta} \theta\right).
\end{equation*}
Plugging this into \eqref{ODEeqn}, shows that  $\alpha=\eta$. Hence, 
\begin{equation*}
 \gamma=2\pi \beta/ \alpha=2\pi \sqrt{1+C_2^2+C_2^2}.
\end{equation*}
We conclude that, 
\begin{equation*}
 E[u]=2\pi \eta- \gamma -\eta \gamma=-2\pi \frac{n^2}{4} +(\frac{1}{4}n^2-1) \gamma.
\end{equation*}
Hence, if $n\geq 2$,  then as $\gamma\geq 2\pi$
\[
E[u]\geq -2\pi \frac{n^2}{4} +2\pi (\frac{n^2}{4}-1)\geq -2\pi.
\]
with equality if and only if $n=2$.
\end{proof}
\begin{rem}
 If $n=1$, then as $\gamma\to \infty$, $E[u]\to 0$.  If $n=2$, then $u=\log \phi'$ for $\phi\in 
\Mob(\mathbb{S}^1)$.
\end{rem}

Combining Propositions \ref{BoundaryLowBndProp} and \ref{ODEProp} gives:
\begin{prop} \label{MainEstProp}
 If $\phi\in \HDiff_+(\mathbb{S}^1)\cap 
\overline{\BDiff}_+^1(\mathbb{S}^1, 4)$, then 
 \begin{equation*}
  \mathcal{E}_G^*[\phi]\geq 0
 \end{equation*}
 with equality if and only if  $\phi=\psi_\tau\circ \hat{\phi}$ where $\psi_\tau$ is of the form \eqref{BadFamily2} for some $\tau\in \Real$ 
and $ \hat{\phi}\in \Mob(\mathbb{S}^1)$.  If, in addition, $\phi \in \Diff_+^2(\mathbb{S}^1)$ or $\phi\in 
\BDiff_+^1(\mathbb{S}^1,4)$, then equality occurs if and only if $\phi \in \Mob(\mathbb{S}^1)$.
\end{prop}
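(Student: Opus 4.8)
The plan is to prove the bound by the direct method in the calculus of variations, minimizing $\mathcal{E}_G^*$ --- equivalently the functional $E[u]=\mathcal{E}_G^*[\phi]-2\pi$ with $u=\log\phi'$ --- over the set $K=\HDiff_+(\mathbb{S}^1)\cap\overline{\BDiff}_+^1(\mathbb{S}^1,4)$, and then splitting according to whether a minimizer lies in the interior or on the boundary of $\BDiff_+^1(\mathbb{S}^1,4)$. The boundary case is dispatched by Proposition \ref{BoundaryLowBndProp}, while the interior case is reduced to the ODE analysis of Proposition \ref{ODEProp}. First I would set up the weak $H^1$ framework on $\HDiff_+(\mathbb{S}^1)$. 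The functional $E$ is weakly lower semicontinuous, since $\int(u')^2$ is weakly lower semicontinuous while $\int e^{2u}$ is weakly continuous: the compact embedding $H^1(\mathbb{S}^1)\hookrightarrow C^0(\mathbb{S}^1)$ makes $u_i\rightharpoonup u$ force $e^{2u_i}\to e^{2u}$ uniformly. The same embedding shows that weak $H^1$ convergence of $u_i=\log\phi_i'$ yields $C^1$ convergence of the $\phi_i$ (after normalizing a base point), so the constraint $\int e^u=2\pi$ passes to the limit and $\overline{\BDiff}_+^1(\mathbb{S}^1,4)$, being closed in $C^1$, is weakly closed. Hence $K$ is weakly closed and $E$ is weakly lower semicontinuous on it.

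The crux --- and what I expect to be the main obstacle --- is coercivity: that $E$ is bounded below on $K$ and that minimizing sequences are bounded in $H^1(\mathbb{S}^1)$. On all of $\HDiff_+(\mathbb{S}^1)$ this is \emph{false}: the Moser--Trudinger inequality only barely controls $\int e^{2u}$ by $\exp(c\int(u')^2)$, and the family \eqref{BadFamily} (which, up to rotation, are exactly the $n=1$ solutions of Proposition \ref{ODEProp}, with $\gamma=\int e^{2u}\to\infty$) drives $E\to-\infty$. The balance-point constraint $n_B\geq 4$ is precisely what forbids this single-bubble concentration, since such concentrating profiles have only two balance points. Making this rigorous --- for instance through a symmetry-improved Moser--Trudinger inequality valid on $K$, or through a concentration-compactness analysis in which an escaping bubble is identified with a rescaled element of \eqref{BadFamily} and thereby contradicts membership in $\overline{\BDiff}_+^1(\mathbb{S}^1,4)$ --- is the technical heart of the argument. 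Granting coercivity, a minimizing sequence converges weakly to a minimizer $\phi_*\in K$.

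It then remains to analyze $\phi_*$. If $\phi_*\in\partial\BDiff_+^1(\mathbb{S}^1,4)$, Proposition \ref{BoundaryLowBndProp} gives $\mathcal{E}_G^*[\phi_*]\geq 0$ together with the stated equality characterization $\phi_*=\psi_\tau\circ\hat\phi$. If instead $\phi_*\in\mathring{\BDiff}_+^1(\mathbb{S}^1,4)$, then the balance constraint is inactive for nearby smooth competitors, so $u_*=\log\phi_*'$ is a free critical point of $E$ subject only to $\int e^u=2\pi$; hence it solves \eqref{SemiLinODE}, and an elliptic bootstrap (the right-hand side is continuous) makes $u_*$ smooth. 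Writing $\gamma=\int e^{2u_*}$, Cauchy--Schwarz gives $2\pi\gamma\geq\left(\int e^{u_*}\right)^2=(2\pi)^2$, so $\gamma\geq 2\pi$ and Proposition \ref{ODEProp} applies with $\alpha=-1$: either $u_*\equiv 0$ (so $\mathcal{E}_G^*=0$) or $\alpha=-n^2$ forces $n=1$. But the $n=1$ solutions are, up to rotation, the maps $\phi_\lambda$ of \eqref{BadFamily}, for which $n_B=2$, so they cannot lie in $\mathring{\BDiff}_+^1(\mathbb{S}^1,4)$; this contradiction leaves only $u_*\equiv 0$. In every case $\mathcal{E}_G^*[\phi_*]\geq 0$, and since $\phi_*$ is a minimizer this proves $\mathcal{E}_G^*\geq 0$ on $K$.

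For the equality statement, a $\phi\in K$ with $\mathcal{E}_G^*[\phi]=0$ is itself a minimizer, so it falls into the same dichotomy: either it is a boundary point giving $\phi=\psi_\tau\circ\hat\phi$, or it is an interior critical point with $E[u]=-2\pi$, in which case the equality clause of Proposition \ref{ODEProp} (having excluded $n=1$) forces $\gamma=2\pi$ or $n=2$, i.e. $\phi\in\Mob(\mathbb{S}^1)$, which is subsumed under the form $\psi_0\circ\hat\phi$. Finally, the refinement follows because for $\tau\neq 0$ the map $\psi_\tau$ of \eqref{BadFamily2} is only $C^{1,1}$ and has $n_B=2$ (and precomposing with $\hat\phi\in\Mob(\mathbb{S}^1)$, which commutes with the antipodal map, preserves both defects), so $\psi_\tau\circ\hat\phi$ lies in neither $\Diff_+^2(\mathbb{S}^1)$ nor $\BDiff_+^1(\mathbb{S}^1,4)$; thus under either additional hypothesis one must have $\tau=0$, whence $\phi\in\Mob(\mathbb{S}^1)$.
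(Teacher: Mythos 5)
Your overall architecture matches the paper's --- the dichotomy between $\partial\BDiff_+^1(\mathbb{S}^1,4)$, handled by Proposition \ref{BoundaryLowBndProp}, and the interior, handled by the ODE analysis of Proposition \ref{ODEProp}, together with the same equality bookkeeping and the same $\tau=0$ refinement at the end --- but the one step you defer, existence of a minimizer, is exactly the step that carries the content of the proof, and what you offer in its place cannot work as stated. The ``coercivity on $K=\HDiff_+(\mathbb{S}^1)\cap\overline{\BDiff}_+^1(\mathbb{S}^1,4)$'' you hope for is not merely hard, it is false: by Theorem \ref{MainSymThm} the functional $\mathcal{E}_G^*$ is invariant under the left M\"obius action $\phi\mapsto\varphi\circ\phi$, every $\varphi\in\Mob(\mathbb{S}^1)$ lies in $K$ (M\"obius maps commute with $I$, so $n_B=\infty$) and has $\mathcal{E}_G^*[\varphi]=0$, while $\|\log\varphi'\|_{H^1}$ is unbounded along the degenerating M\"obius orbit. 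Hence the sublevel set $\{\mathcal{E}_G^*\leq 0\}\cap K$ is unbounded in $H^1$, minimizing sequences need not be bounded, and no improved Moser--Trudinger inequality on $K$ can change that; the concentration-compactness-modulo-the-group analysis you gesture at would be a genuinely substantial piece of work that your proposal does not carry out.

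The idea you are missing --- the paper's actual resolution --- is to add a \emph{second constraint} rather than to prove coercivity. Assuming for contradiction that some $\phi_0\in K$ has $\mathcal{E}_G^*[\phi_0]<0$, the paper sets $\gamma_0=\int_{\mathbb{S}^1}\mathrm{e}^{2u_0}\,\mathrm{d}\theta$ and minimizes $\mathcal{E}_G^*$ over $K$ subject to the extra condition $\int_{\mathbb{S}^1}\mathrm{e}^{2u}\,\mathrm{d}\theta=\gamma_0$. This breaks the M\"obius invariance, and on the constraint set $E[u]=\frac14\int_{\mathbb{S}^1}(u')^2\,\mathrm{d}\theta-\gamma_0$, so one is literally minimizing the Dirichlet energy; combined with $\int_{\mathbb{S}^1}\mathrm{e}^{u}\,\mathrm{d}\theta=2\pi$ (which pins down the value of $u$ at some point) this gives $H^1$ bounds for free, and Rellich compactness produces a constrained minimizer, which must lie in $\mathring{\BDiff}_+^1(\mathbb{S}^1,4)$ since its energy is negative while Proposition \ref{BoundaryLowBndProp} forces nonnegative energy on the boundary. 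The price of the extra constraint is a second Lagrange multiplier: the minimizer satisfies $\frac14 u''-\alpha\mathrm{e}^{2u}+\beta\mathrm{e}^{u}=0$, i.e.\ \eqref{ODEeqn} rather than \eqref{SemiLinODE}, and this is precisely why Proposition \ref{ODEProp} is formulated for general $(\alpha,\beta)$; one cannot take $\alpha=-1$ as you do, and instead uses $n_B\geq 4$ to exclude the $n=1$ solutions (the family \eqref{BadFamily}) and conclude $E\geq-2\pi$, i.e.\ $\mathcal{E}_G^*\geq 0$, contradicting negativity. Your boundary case, your exclusion of \eqref{BadFamily}, and your closing argument that $\psi_\tau\circ\hat{\phi}$ lies in $\Diff_+^2(\mathbb{S}^1)$ or in $\BDiff_+^1(\mathbb{S}^1,4)$ only when $\tau=0$ all agree with the paper; but without the constrained-minimization trick, or a completed compactness argument in its place, your proof does not go through.
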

\begin{rem}
 This result is sharp in that the inequality fails for \eqref{BadFamily}.  
\end{rem}
\begin{proof}
 If inequality does not hold, then there is a $\phi_0\in  \HDiff_+(\mathbb{S}^1)\cap 
\overline{\BDiff}_+^1(\mathbb{S}^1, 4)$ so that $\mathcal{E}_G^*[\phi_0]<0$.  Let $u_0=\log \phi_0'$ and
 set $\gamma_0=\int_{\mathbb{S}^1} (\phi')^2 \; \mathrm{d}\theta=\int_{\mathbb{S}^1} \mathrm{e}^{2u_0} \; \mathrm{d}\theta$. The Cauchy-Schwarz inequality implies 
that $\gamma_0\geq 2\pi$ with equality if and 
only if $u_0\equiv 0$. Now consider the minimization problem
 \begin{equation}
  E(\gamma)= \inf\set{\mathcal{E}_G^*[\phi] \; \phi 
\in  \HDiff_+(\mathbb{S}^1)\cap \overline{\BDiff}_+^1(\mathbb{S}^1, 4), \int_{\mathbb{S}^1} 
(\phi')^2\; \mathrm{d}\theta=\gamma}.
 \end{equation} 
Clearly, our assumption ensures that $E(\gamma_0)\leq \mathcal{E}_{G}^*[\phi_0]<0$. Notice 
without the constraint $\int_{\mathbb{S}^1} 
(\phi')^2\; \mathrm{d}\theta$  the 
symmetry of Theorem \ref{MainSymThm}
would imply that  $E$ is not 
coercive for the $H^1$-norm of  $u=\log \phi'$.
  However, with the constraint we are minimizing the Dirichlet energy of $u$
and so the Rellich compactness theorem 
gives a 
$u_{\min}\in H^1_{2\pi}(\mathbb{S}^1)$ satisfying
\[
 E(\gamma_0)=\int_{\mathbb{S}^1}\frac{1}{4} (u'_{\min})^2 -\mathrm{e}^{2u_{\min}} \; \mathrm{d}\theta+2\pi=\int_{\mathbb{S}^1}\frac{1}{4} 
(u'_{\min})^2 \; 
\mathrm{d}\theta-\gamma_0+2\pi<0.
\]
and, hence, a $\phi_{\min}\in \HDiff_+(\mathbb{S}^1)\cap\overline{\BDiff}_+^1(\mathbb{S}^1, 4)$ so that $\log \phi'_{\min} =u_{\min}$.  
However, 
Proposition \ref{BoundaryLowBndProp} implies that $\phi_{\min} \in   \mathring{\BDiff}_+^1(\mathbb{S}^1, 4)$.
This implies that $u_{\min}$ is critical with respect to arbitrary variations in $H^1(\mathbb{S}^1)$ which preserve the 
constraints
\begin{equation*}
 \begin{array}{ccc} \int_{\mathbb{S}^1} \mathrm{e}^u \; \mathrm{d}\theta=2\pi& \mbox{and} & \int_{\mathbb{S}^1} \mathrm{e}^{2u} \; 
\mathrm{d}\theta=\gamma_0 .
\end{array}
\end{equation*}
Hence,  $u_{\min}$ weakly satisfies the 
Euler-Lagrange equation
\begin{equation*}
  \frac{1}{4}u''_{\min} -\alpha \mathrm{e}^{2 u_{\min}}+\beta \mathrm{e}^{u_{\min}}=0.
\end{equation*}
As this is a semi-linear ODE and $u_{\min}\in C^{1/2}(\mathbb{S}^1$) by Sobolev embedding, $u_{\min}\in C^{2+\alpha}(\mathbb{S}^1)$ and 
satisfies this equation classically. Hence, $u_{\min}$ is smooth by standard ODE theory.  
Notice, that if $\phi_{\lambda}$ is one of the elements of \eqref{BadFamily}, then 
$$u_{\lambda}=\log \phi_{\lambda}'=-\log\left( \frac{1}{2}(\lambda+\lambda^{-1})+\frac{1}{2}(\lambda-\lambda^{-1} )\cos \theta(p)\right).$$
Applying,  Proposition \ref{ODEProp} to $u_{\min}$ we see that, up to a rotation, if $n=1$, then $\phi_{\min}=\phi_{\lambda}$ for some 
$\lambda$.  As $n_{B}(\phi_{\min})\geq 4$, this is impossible.  Hence, $n\geq 2$ and so $E[u_{\min}]\geq 0$ which contradicts $E(\gamma_0)< 
0$ and proves the inequality.  

Equality cannot hold for $\phi\in \mathring{\BDiff}_+^1(\mathbb{S}^1,4)$.  If it did, $\phi$ would be a 
critical point for $\mathcal{E}_G^*$ with respect to arbitrary variations in $\HDiff_+(\mathbb{S}^1)$.  Applying Proposition \ref{ODEProp} 
to $u=\log \phi'$ shows this is impossible.  Hence, equality is only achieved on $\partial \BDiff_+^1(\mathbb{S}^1,4)$ and so the claim follows from Proposition \ref{BoundaryLowBndProp} and the observation that, for $\psi_\tau$ as in \eqref{BadFamily2}, $\psi_\tau \in 
\Diff_+^2(\mathbb{S}^1)$ or $\BDiff_+^1(\mathbb{S}^1, 4)$ if and only if $\tau=0$.
\end{proof}

We may now conclude the main geometric estimates.
\begin{proof}[Proof of Theorem \ref{MainThm}]
 The natural scaling of the problem means that we may apply a homothety to take $L(\sigma)=2\pi$. 
 As $\sigma$ is a smooth closed strictly convex curve, it is a smooth degree-one strictly convex curve.  Let $\phi_\sigma\in 
\Diff^+(\mathbb{S}^1)$, be the induced diffeomorphism and let $\psi_\sigma=\phi^{-1}_\sigma$. 
By Lemma \ref{LindeLem}, $\phi_\sigma, \psi_\sigma\in \BDiff_+^1(\mathbb{S}^1,4)$.  The claim now follows from Propositions
\ref{MainEstProp} and \ref{DualGProp}.
\end{proof}

\appendix
\section{On extending the conjecture of Benguria and Loss}
Benguria and Loss's conjecture concerns closed curves.  In light of the present paper, specifically
the symmetry of Theorem \ref{MainSymThm}, it 
is tempting to think that their conjecture can be extended to degree-one curves with more than two balance points.  However, this is not the 
case.
\begin{lem}
 For every $N\in \mathbb{N}$, there is a $\sigma\in \mathcal{D}^\infty_+$ so that $\phi_\sigma\in \BDiff_+^\infty(\mathbb{S}^1,N)$ and 
$\mathcal{E}_S[\sigma,f]<0$ for some function $f\in C^\infty(\mathbb{S}^1)$.
\end{lem}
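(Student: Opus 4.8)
The plan is to reduce the statement to a spectral problem for the curvature and then to decouple, spatially, the two competing requirements: negativity of the energy and abundance of balance points. After a homothety I take $L(\sigma)=2\pi$, so that $\mathrm{d}s=\mathrm{d}\theta$, $\kappa_\sigma=\phi_\sigma'$, and—writing $\yY=f\TT_\sigma$ exactly as in the proof of Proposition~\ref{SymCurvesOvals}—$\mathcal{E}_S[\sigma,f]=\int_{\mathbb{S}^1}|\yY'|^2-|\yY|^2\,\mathrm{d}\theta$. Equivalently, $\mathcal{E}_S[\sigma,f]=\int_{\mathbb{S}^1}(f')^2+\kappa_\sigma^2 f^2-f^2\,\mathrm{d}\theta$, so that $\mathcal{E}_S[\sigma,f]<0$ for some $f$ if and only if the lowest eigenvalue $\lambda_0$ of the Schr\"odinger operator $L_\sigma=-\partial_\theta^2+\kappa_\sigma^2$ satisfies $\lambda_0<1$. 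It therefore suffices to produce, for each $N$, a diffeomorphism $\phi\in\Diff_+^\infty(\mathbb{S}^1)$ with $n_B(\phi)\ge N$ such that the curvature $\kappa=\phi'$ has $\lambda_0(-\partial_\theta^2+\kappa^2)<1$; the curve $\sigma$ with $\phi_\sigma=\phi$ and $L(\sigma)=2\pi$ is then the desired example, with $f$ a lowest eigenfunction of $L_\sigma$.

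First I would generate the negativity from a single deep, wide curvature well. Concretely, I take $\kappa$ to equal a small constant on a sub-arc $W$ and to be large on the complement, subject to $\int_{\mathbb{S}^1}\kappa\,\mathrm{d}\theta=2\pi$. Choosing $W$ wide enough and the well depth small enough, a test function supported in $W$ already forces $\lambda_0<1$; moreover the large barrier $\kappa^2$ on $\mathbb{S}^1\setminus W$ confines the ground state $f_0$ exponentially to $W$, so that $\lambda_0$ lies strictly below $1$ with a definite gap and $f_0$ is exponentially small away from $W$. This gives a large ``budget'': any modification of $\kappa$ supported where $f_0$ is negligible perturbs $\lambda_0$ only by an exponentially small amount.

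Next I would create the balance points by an antisymmetric curvature modulation placed where it cannot see the ground state. Recall that a balance point is a zero of $P(\theta)=\int_\theta^{\theta+\pi}\kappa\,\mathrm{d}\theta-\pi$, that $P(\theta+\pi)=-P(\theta)$, and that $P'(\theta)=\kappa(\theta+\pi)-\kappa(\theta)$; thus $P$ is locally constant exactly where $\kappa$ is antipodally symmetric. I would arrange the base curvature so that $\kappa$ is antipodally symmetric along an arc $I$ with $I$ and $I+\pi$ disjoint from $W$, and so that the resulting constant value of $P$ on $I$ is $0$ (a single scalar condition, enforced for instance by a reflection symmetry of the base curvature about the midpoint of $I$). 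I would then add a small, rapidly oscillating antisymmetric perturbation of $\kappa$ supported in $I\cup(I+\pi)$. Since $P$ vanishes identically on $I$ for the base curve, this perturbation makes $P$ oscillate about $0$ and hence produces as many zeros—balance points—as desired, while keeping $\kappa>0$. As $I$ lies across the barrier from $W$, the modulation changes $\lambda_0$ only negligibly, so $\lambda_0<1$ persists; smoothing the resulting $\kappa$ gives $\sigma\in\mathcal{D}_+^\infty$ with $\phi_\sigma\in\BDiff_+^\infty(\mathbb{S}^1,N)$.

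The main obstacle is precisely the competition between the two features, and this is what forces the construction to be global rather than perturbative. A second–variation computation at the round circle shows that the only directions decreasing $\mathcal{E}_S$ are first–harmonic, and these yield only two balance points; so one genuinely cannot meet both demands by a small deformation of an oval, and a finite–size well is unavoidable. The resolution is the spatial separation above, made effective by the exponential spectral suppression coming from a sufficiently deep well. The crux of the argument—where care is needed—is the simultaneous bookkeeping: arranging a base curvature that is antipodally symmetric with $P\equiv 0$ on $I$ yet carries a genuinely asymmetric well on $W$ giving $\lambda_0<1$, and then verifying that the oscillatory modulation producing the $\ge N$ zeros of $P$ can be taken small enough (equivalently, that the well can be taken deep enough, depending on $N$) to preserve both $\kappa>0$ and $\lambda_0<1$.
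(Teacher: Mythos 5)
Your reduction of the statement to a spectral problem is correct: with $L(\sigma)=2\pi$ one has $\mathcal{E}_S[\sigma,f]=\int_{\mathbb{S}^1}(f')^2+\kappa^2f^2-f^2\,\mathrm{d}\theta$ with $\kappa=\phi_\sigma'$, so negativity for some $f$ is equivalent to the lowest eigenvalue $\lambda_0$ of $-\partial_\theta^2+\kappa^2$ being $<1$; and balance points are indeed the zeros of $P(\theta)=\int_\theta^{\theta+\pi}\kappa\,\mathrm{d}\theta-\pi$. The gap is that the two demands you place on the well $W$ are mutually exclusive, so the central construction cannot be carried out. Since $\kappa^2\geq 0$, any $f$ supported in an arc of length $|W|$ satisfies $\int (f')^2+\kappa^2f^2\,\mathrm{d}\theta\geq (\pi/|W|)^2\int f^2\,\mathrm{d}\theta$, so your claim that a test function supported in $W$ forces $\lambda_0<1$ requires $|W|>\pi$. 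But you also require $W$ to be disjoint from $I\cup(I+\pi)$, and $\mathbb{S}^1\setminus\bigl(I\cup(I+\pi)\bigr)$ is the union of two arcs, each of length $\pi-|I|<\pi$; hence every component of such a $W$ has length strictly less than $\pi$. Moreover, the contradiction cannot be fixed by a cleverer test function: in the regime where your perturbation step is justified --- a barrier so high that the ground state is exponentially confined to $W$ --- the eigenvalue $\lambda_0$ increases monotonically up to the Dirichlet ground energy of $W$, which is at least $(\pi/|W|)^2>1$. So precisely when the ``exponential budget'' exists one has $\lambda_0>1$, and when the barrier is moderate enough to allow $\lambda_0<1$, the ground state is not localized away from $I\cup(I+\pi)$ and your error estimate collapses. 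This is exactly the ``simultaneous bookkeeping'' you flag as the crux, and it fails rather than merely requiring care.

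The perturbative strategy can be rescued, but only by giving up the spatial separation, which is the organizing idea of your proof. For instance, put the wells \emph{at} $I\cup(I+\pi)$ (say $\kappa=\epsilon$ there) and place two narrow bumps, each of curvature mass $\pi-\epsilon|I|$, on the two complementary arcs; this symmetric configuration automatically has $P\equiv 0$ on $I$, and a computation with the resulting near-delta barriers (strength $g$: $\tan(k\pi/2)=g/(2k)$, so $k<1$) gives $\lambda_0<1$ by a small but definite margin. The rapidly oscillating antisymmetric modulation $\mu$ on $I\cup(I+\pi)$ then preserves $\lambda_0<1$ not because the ground state is small there (it is in fact concentrated there), but because the potential changes only by $2\epsilon\|\mu\|_{C^0}+\|\mu\|_{C^0}^2$ in $C^0$, which can be made smaller than the gap while the frequency of $\mu$ is taken large enough to create at least $N$ zeros of $P$. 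For comparison, the paper's proof avoids all spectral estimates: it starts from the family $\psi_\tau$ of \eqref{BadFamily2}, for which $f_\tau=\kappa_{\sigma_\tau}^{-1/2}$ gives $\mathcal{E}_S[\sigma_\tau,f_\tau]=\mathcal{E}_G[\sigma_\tau]=0$ while $f_\tau$ fails to be an eigenfunction (distributionally $L_{\sigma_\tau}f_\tau=f_\tau+C(\tau)\delta_{\eE_2}-C(\tau)\delta_{-\eE_2}$ with $C(\tau)\neq 0$ for $\tau\neq 0$), so some $\hat f_\tau$ has $\mathcal{E}_S[\sigma_\tau,\hat f_\tau]<0$; it then replaces $\psi_\tau$ by the conjugates $\psi_\tau^\lambda=\phi_\lambda^{-1}\circ\psi_\tau\circ\phi_\lambda$ of \eqref{BadFamily3}, which have $n_B=\infty$ and converge in $C^2$ as $\lambda\to 1$, so the strict negativity survives and smoothing finishes the argument.
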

\begin{proof}
 Consider $\sigma_\tau$ to be the curve in $\mathcal{D}_+^{2,1}$  which has $\sigma_\tau(\eE_1)=\eE_1$, 
$L(\sigma_\tau)=2\pi$ and induced diffeomorphism $\phi_{\sigma_\tau}=\psi_\tau$ where $\psi_\tau$ is given by \eqref{BadFamily2}.  Note, 
that for $\tau\neq 0$, $\sigma_\tau$ is not closed.  One 
computes that for $f_\tau=\kappa_{\sigma_\tau}^{-1/2}\in C^{0,1}(\mathbb{S}^1)\subset H^1(\mathbb{S}^1)$, that 
$\mathcal{E}_S[\sigma_\tau,f_\tau]=\mathcal{E}_G[\sigma_\tau]=0$.  However,  
$$
L_{\sigma_\tau}f_\tau=-f_\tau'' +\kappa_{\sigma_\tau}^2 f_\tau = f_\tau +C(\tau)\delta_{\eE_2}-C(\tau)\delta_{-\eE_2},
$$
 distributionally and the constant $C(\tau)\neq 0$ if and only if $\tau\neq 0$.  Hence, for 
$\tau\neq 0$, $f_\tau$ is not an eigenfunction and so there must be a $\hat{f}_\tau\in C^2(\mathbb{S}^1)$ with
$\mathcal{E}_S[\sigma_\tau,\hat{f}_\tau]<0$.  Consider the elements $\psi_{\tau}^\lambda\in \Diff_+^{1,1}(\mathbb{S}^1)$ given by 
\eqref{BadFamily3} and pick $\sigma_{\tau}^\lambda\in \mathcal{D}_+^{2,1}$ so that $\sigma_\tau^\lambda(\eE_1)=\eE_1$, 
$L(\sigma_\tau^\lambda)=2\pi$ and the induced diffeomorphism is
$\psi_{\tau}^\lambda$.  Clearly,  $\sigma_{\tau}^\lambda\to \sigma_\tau$ as $\lambda\to 1$ in the $C^2$ topology.  Hence, 
$\mathcal{E}_S[\sigma_{\tau}^{\lambda},\hat{f}_\tau]\to \mathcal{E}_S[\sigma_{\tau},\hat{f}_\tau]$ as $\lambda\to 1$.  Hence, for $\tau\neq 
0$ and $\lambda>1$ sufficiently close to $1$, we obtain a $\sigma\in \mathcal{D}_+^\infty$ with $n_B(\sigma)=\infty$ 
and $\mathcal{E}_S[\sigma, \hat{f}_\tau]<0$ by smoothing out $\sigma_{\tau}^\lambda$ as in Lemma \ref{SmoothingLem}. 
Smoothing out $\hat{f}_\tau$ gives $f$ so that $\mathcal{E}_S[\sigma,f]<0$.
\end{proof}

\section{Projective structures}\label{AppB}

We review some basic concepts from projective differential geometry which will motivate the definition of $\Mob(\mathbb{S}^1)$ made above 
as well as provide the natural context for the symmetries of the functionals of $\eqref{MainIneq}, \eqref{OneIneq}$ and 
$\eqref{TwoIneq}$. This is a vast subject with many different perspectives and we present only a summarized version. We refer the 
interested 
reader to the excellent book \cite{MR2177471} by  Ovsienko and Tabachnikov as well as their article \cite{MR2489717} -- these were our 
main 
sources for this material.
\subsection{One-Dimensional Projective Differential Geometry}
Let $M$ be a one-dimensional oriented manifold. {We fix a square root $(T^*M)^{1/2}$ of the cotangent bundle of $M$ so that we have an isomorphism of line bundles
$$
(T^*M)^{1/2}\otimes (T^*M)^{1/2}\simeq T^*M. 
$$
\begin{rem}
Note that on the circle there are two non-isomorphic choices of such a root, the trivial line bundle and the M\"obius strip. In what follows we will work with the trivial root on the circle.  
\end{rem}
For an integer $\ell$ we denote by $\Omega^{\ell/2}(M)$ the space of smooth densities of weight $\ell/2$ on $M$.  That is, an element in $\Omega^{\ell/2}(M)$ is a smooth section of the $\ell$-th tensorial power of $(T^*M)^{1/2}$.} As usual, for $\ell < 0$ we define 
$$
\left((T^*M)^{1/2}\right)^{\otimes \ell}=\left((TM)^{1/2}\right)^{\otimes(-\ell)}
$$
where $(TM)^{1/2}$ denotes the dual bundle of $(T^*M)^{1/2}$. 

{Note that an affine connection $\nabla$ on $TM\simeq (T^*M)^{-1}$ induces a connection on all tensorial powers of $(T^*M)^{1/2}$. By standard abuse of notation, we will denote these connections by $\nabla$ as well. In particular, we have first order differential operators}
\begin{equation*}
 \nabla:  \Omega^{\ell/2}(M)\to \Omega^{\ell/2+1}(M).
\end{equation*}
A \emph{real projective structure}, $\mathcal{P}$ on $M$ is a second-order elliptic 
differential operator
\begin{equation*}
\mathcal{P}:\Omega^{-1/2}(M)\to \Omega^{3/2}(M)
\end{equation*}
 so that there is some affine connection $\nabla$ on $M$ and $P\in \Omega^2(M)$ with
\begin{equation*}
 \mathcal{P}=\nabla^2+P.
\end{equation*}

One verifies that, given two real projective structures $\mathcal{P}_1$ and $\mathcal{P}_2$, 
$\mathcal{P}_2-\mathcal{P}_1\in \Omega^2(M)$ is a zero-order operator.  Hence, the space of real 
projective structures is an affine space {with associated vector space} $\Omega^2(M)$.  Given an orientation preserving smooth diffeomorphism $\phi:M_1\to M_2$
 we define the push forward and pull back of real projective 
structures $\mathcal{P}_i$ on $M_i$ in an obvious fashion.  That is,
\begin{equation*}
\begin{array}{ccc}
 (\phi_* \mathcal{P}_1) \cdot \theta = (\phi^{-1})^*\left(\mathcal{P}_1 \cdot 
\phi^* \theta\right) & \mbox{and} &
 (\phi^* \mathcal{P}_2) \cdot \theta = \phi^*\left(\mathcal{P}_2 \cdot 
(\phi^{-1})^* \theta\right).
\end{array}
\end{equation*}
The \emph{Schwarzian derivative} of $\phi$ relative to $\mathcal{P}_1,\mathcal{P}_2$ is
\begin{equation*}
 S_{\mathcal{P}_1, \mathcal{P}_2}(\phi)= \phi^* \mathcal{P}_2-\mathcal{P}_1 \in \Omega^2(M_1).
\end{equation*}  
The Schwarzian satisfies the following co-cycle condition
\begin{equation} \label{cocyclecond}
 S_{\mathcal{P}_1, \mathcal{P}_3}(\phi_2 \circ \phi_1)=\phi_1^* S_{\mathcal{P}_2,\mathcal{P}_3 }(\phi_2)+ 
S_{\mathcal{P}_1,\mathcal{P}_2}(\phi_1).
\end{equation}
Given a $\phi \in \Diff_+^\infty(M)$ and a real projective structure $\mathcal{P}$  write $S_{\mathcal{P}} (\phi)=S_{\mathcal{P}, 
\mathcal{P}}(\phi)$.  An orientation preserving diffeomorphism $\phi$ is a \emph{M\"{o}bius transformation of $\mathcal{P}$} if 
and only if $S_{\mathcal{P}}(\phi)=0$. 
The co-cycle condition implies that the set of such maps forms a subgroup, $\Mob(\mathcal{P})$, of $\Diff_+^\infty(M)$. 

Let $\mathbb{RP}^1$ be the one-dimensional real projective space -- in 
other words the space of unoriented lines through the origin in $\Real^2$. Let 
$(x_1, x_2)$ be the usual linear coordinates on $\Real^2$.  If $(x_1,x_2)\neq 
0$, then we denote by $[x_1 : x_2]$ the point in $\RP^1$ corresponding to the 
line through the origin and  $(x_1,x_2)$.  On the chart $U=\set{[x_1,x_2]: 
x_2\neq 0}$ we have the affine coordinate $\tau=x_1/x_2$ for $\RP^1$.  Let 
$\taunabla$ be the (unique) connection so that $\partial_\tau$ is parallel. There is a unique real projective structure 
$\mathcal{P}_{\RP^1}$ on 
$\RP^1$ so that $\mathcal{P}_{\RP^1}=\taunabla^2$. This is the 
\emph{standard real projective structure} on $\RP^1$.   

If $\phi\in \Diff_+^\infty(\RP^1)$, then one computes that
\begin{equation*}
 S_{{\RP^1}}(\phi)=S_{\mathcal{P}_{\RP^1}}(\phi)= \left(\frac{\phi'''}{\phi'}-\frac{3}{2} \left( \frac{\phi''}{\phi'}\right)^2 
\right)\mathrm{d}\tau^2
\end{equation*}
where here $\phi'= \partial_\tau (\tau\circ \phi)$
and likewise for the higher derivatives.  
This is the classical form of the Schwarzian derivative introduced in~\S3.  Write $\Mob(\RP^1)$ for the M\"{o}bius 
group of $\mathcal{P}_{\RP^1}$ and observe these are the fractional linear transformations.
Indeed, if $\phi\in \Mob(\RP^1)$, then there is a matrix
\begin{equation*}
 L=\begin{pmatrix} a & b \\ c & d \end{pmatrix} \in \mathrm{SL}(2,\Real)
\end{equation*}
so that
\begin{equation*}
 \tau(\phi(p)) = \frac{ a \tau(p)+b}{c \tau(p)+d}.
\end{equation*}
This corresponds to the natural action of $\mathrm{SL}(2,\Real)$ on the 
space of lines through the origin. Let
\begin{equation*}
 \gamma: \mathrm{SL}(2,\Real) \to \Diff_+^\infty(\RP^1).
\end{equation*}
denote this group homomorphism.
Notice that $\ker \rho=\pm \mathrm{Id}$ and so this map induces an injective homomorphism
\begin{equation*}
 \tilde{\gamma}: \mathrm{PSL}(2,\Real) \to \Diff_+^\infty(\RP^1)
\end{equation*}
whose image is $\Mob(\RP^1)$.

Consider the natural map $T:\mathbb{S}^1 \to \RP^1$ given by sending a point $p$ to the tangent line to $\mathbb{S}^1$ through $p$. Let 
$\thetanabla$ be the unique connection on $\mathbb{S}^1$ so that $ \partial_\theta$ is parallel and let 
$\mathcal{P}_{\mathbb{S}^1}=\thetanabla^2$. 
If $\phi\in \Diff_+^\infty(\mathbb{S}^1)$, then one computes that
\begin{equation*}
 S_{\mathbb{S}^1}(\phi)=S_{\mathcal{P}_{\mathbb{S}^1}}(\phi) = \left(\frac{\phi'''}{\phi'}-\frac{3}{2} \left( 
\frac{\phi''}{\phi'}\right)^2
\right)\mathrm{d}\theta^2
\end{equation*}
where here $\phi'$ has already been defined.  Define $S_{\theta}(\phi)$ so $S_{\mathbb{S}^1}(\phi)=S_{\theta}(\phi) \mathrm{d}\theta^2.$

As $T\circ I=T$, if $\phi\in\Diff_+^\infty(\mathbb{S}^1)$ satisfies $\phi\circ I=I\circ \phi$, then there is a well-defined 
element $\tilde{T}(\phi)\in  \Diff_+^\infty(\RP^1)$ so that the following diagram is commutative:
\begin{equation*}
\begin{tikzcd}
 \mathbb{S}^1 \arrow{r}{\phi} \arrow{d}{T} & \mathbb{S}^1 \arrow{d}{T}\\
 \RP^1 \arrow{r}{\tilde{T}(\phi)} & \RP^1 
\end{tikzcd}
\end{equation*}
A straightforward computation shows that,
\begin{equation*}
 S_{\mathbb{S}^1,\RP^1}(T)=S_{\mathcal{P}_{\mathbb{S}^1},\mathcal{P}_{\RP^1}}(T)= 2 \mathrm{d}\theta^2.
\end{equation*}
Hence, for  a $\phi\in\Diff_+^\infty(\mathbb{S}^1)$ which satisfies $\phi\circ I=I\circ \phi$ the co-cycle relation for the Schwarzian 
implies
\begin{align*}
 0&=S_{\mathbb{S}^1, \RP^1}(T\circ \phi)-S_{\mathbb{S}^1, \RP^1}(\tilde{T}(\phi)\circ T) \\
  &= 2 \phi^* \mathrm{d}\theta^2 +S_{\mathbb{S}^1}(\phi)-T^* S_{\RP^1}(\tilde{T}(\phi))+2 \mathrm{d}\theta^2\\
  &=S_{\mathbb{S}^1}(\phi)+2 (\phi')^2 \mathrm{d}\theta^2 +2 \mathrm{d}\theta^2 -T^* S_{\RP^1}(\tilde{T}(\phi))
\end{align*}
That is,
\begin{equation*}
 S_{{\mathbb{S}^1}} (\phi) +2(\phi' )^2 \mathrm{d}\theta^2 -2 \mathrm{d}\theta^2=T^* S_{\RP^1} (\tilde{T}(\phi)).
\end{equation*}
One verifies from their definitions that $\tilde{T}(\Mob(\mathbb{S}^1))=\Mob(\RP^1)$ and which gives \eqref{SchwarzianMobS1eqn}.  Finally, 
we note the following commutative diagram
\begin{equation*}
 \begin{tikzcd}
  \mathrm{SL}(2,\Real) \arrow{r}{\Gamma} \arrow{d}{\pi} \arrow{rd}{\gamma}& \Mob(\mathbb{S}^1)\arrow{d}{\tilde{T}}\\
  \mathrm{PSL}(2,\Real) \arrow{r}{\tilde{\gamma}} & \Mob(\RP^1)
 \end{tikzcd}
\end{equation*}
where $\pi$ is the natural projection.

\begin{rem}
We have defined a real projective structure on $M$ is terms of a differential operator. Equivalently (and more commonly), a real projective 
structure on $M$ may be defined to be a maximal atlas mapping open sets in $M$ into $\mathbb{RP}^1$ such that the transition functions are 
restrictions of fractional linear  transformations. For the equivalency of the two definitions the reader may consult~\cite{MR2177471}.
\end{rem}

{\bf Acknowledgement.} The authors express gratitude to the anonymous referees for their careful reading and many useful suggestions.

\providecommand{\bysame}{\leavevmode\hbox to3em{\hrulefill}\thinspace}
\providecommand{\MR}{\relax\ifhmode\unskip\space\fi MR }
\providecommand{\MRhref}[2]{%
  \href{http://www.ams.org/mathscinet-getitem?mr=#1}{#2}
}
\providecommand{\href}[2]{#2}

\end{document}